\theoremstyle{plain}
  \newtheorem{theorem}{Theorem}[section]
  \newtheorem*{theorem*}{Theorem}
  \newtheorem{corollary}[theorem]{Corollary}
  \newtheorem*{corollary*}{Corollary}
  \newtheorem{lemma}[theorem]{Lemma}
  \newtheorem*{lemma*}{Lemma}
  \newtheorem*{proposition*}{Proposition}
\theoremstyle{definition}
  \newtheorem{algorithm}[theorem]{Algorithm}
  \newtheorem*{algorithm*}{Algorithm}
  \newtheorem*{assumption*}{Assumption}
  \newtheorem*{conjecture*}{Conjecture}
  \newtheorem*{definition*}{Definition}
  \newtheorem*{example*}{Example}
  \newtheorem*{hypothesis*}{Hypothesis}
  \newtheorem*{property*}{Property}
  \newtheorem{remark}[theorem]{Remark}
  \newtheorem*{remark*}{Remark}
\newcommand{\N}{\mathbb N}
\newcommand{\R}{\mathbb R}
\newcommand{\Sy}{\mathbb S}
\newcommand{\EE}{\mathbb E}
\newcommand{\M}{\mathcal M}
\newcommand{\T}{\mathcal T}
\newcommand{\bT}{\overline{\mathcal T}}
\newcommand{\U}{\mathcal U}
\newcommand{\F}{\mathcal F}
\newcommand{\K}{\mathcal K}
\newcommand{\Q}{\mathcal Q}
\newcommand{\G}{\mathcal G}
\newcommand{\W}{\mathcal W}
\newcommand{\DD}{\mathcal D}
\newcommand{\Po}{\mathcal P}
\newcommand{\Li}{\mathcal L}
\newcommand{\Ha}{\mathcal H}
\newcommand{\C}{\mathcal C}
\newcommand{\tr}{\mathop{\mathrm{tr}}}
\newcommand{\tp}{^{\mathsf T}\,}
\newcommand{\card}[1]{\# #1}
\newcommand{\A}{\mathfrak A}
\newcommand{\NXW}{N_{\mathrm{rg}}}
\newcommand{\NX}{N_x}
\newcommand{\NW}{N_w}
\newcommand{\Nzero}{N_{\mathrm{in}}}
\newcommand{\eps}{\epsilon}
\newcommand{\NEW}[1]{{\em #1\/}\index{#1}}
\newcommand{\Msmall}{\overline{\M}}
\newcommand{\cardMs}{\card{\Msmall}}
\def\<#1,#2>{\langle #1, #2\rangle}
\newcommand{\fu}{\underline{f}}
\newcommand{\gu}{g}
\newcommand{\sigmau}{\underline{\sigma}}
\newenvironment{skproof}{\begin{proof}[Sketch of proof]}{\end{proof}}
\newcommand{\bo}{_{\text{b}}}
\title[Probabilistic max-plus schemes 
for solving HJB equations]{Probabilistic max-plus schemes 
for solving Hamilton-Jacobi-Bellman equations}
\author{Marianne Akian \and Eric Fodjo}
\address{M.~Akian: INRIA and CMAP, \'Ecole Polytechnique CNRS. Address:
CMAP, \'Ecole Polytechnique,
Route de Saclay,
91128 Palaiseau Cedex}
\email[M.~Akian]{Marianne.Akian@inria.fr}
\address{E.~Fodjo: I-Fihn Consulting and 
INRIA and CMAP, \'Ecole polytechnique CNRS. Address:
CMAP, \'Ecole Polytechnique,
Route de Saclay,
91128 Palaiseau Cedex}
\email[E.~Fodjo]{eric.fodjo@polytechnique.edu}
\thanks{The first author was partially supported by the ANR project MALTHY, ANR-13-INSE-0003,
by ICODE, and by PGMO, a joint program of EDF and FMJH (Fondation Math{\'e}matique Jacques Hadamard)}
\keywords{Stochastic control, Hamilton-Jacobi-Bellman equations,
Max-plus numerical methods, Tropical methods, Probabilistic schemes.}
\subjclass[2010]{93E20,  49L20, 49M25, 65M75}
\begin{document}

\begin{abstract}

We consider fully nonlinear Hamilton-Jacobi-Bellman equations
associated to diffusion control problems involving 
a finite set-valued (or switching) control and possibly 
a continuum-valued control.
In previous works (Akian, Fodjo, 2016 and 2017), 
we introduced a lower complexity 
probabilistic numerical algorithm
for such equations by combining max-plus and numerical probabilistic approaches.
The max-plus approach is in the spirit of the one of
McEneaney, Kaise and Han  (2011), and is based on the distributivity
of monotone operators with respect to suprema.
The numerical  probabilistic approach is in the 
spirit of the one proposed by Fahim, Touzi and Warin (2011).
A difficulty of the latter algorithm was in the critical
constraints imposed on the Hamiltonian to ensure the monotonicity of the
scheme, hence the convergence of the algorithm.
Here, we present new probabilistic schemes which are monotone
under rather weak assumptions, and show error estimates for these schemes.
These estimates will be used in further works to
study the probabilistic max-plus method.

\end{abstract}

\maketitle

\section{Introduction}

\label{sec-int}

We consider a finite horizon diffusion control problem on $\R^d$ involving 
at the same time a ``discrete'' control taking its values in a finite set $\M$, 
and a ``continuum'' control taking its values in some subset $\U$ 
of a finite dimensional space $\R^p$ (for instance a 
convex set with nonempty interior), which we next describe.

Let $T$ be the horizon. 
The state $\xi_s\in\R^d$ at time $s\in [0,T]$
satisfies the stochastic differential equation
\begin{equation}\label{defxi}
d \xi_s = f^{\mu_s} (\xi_s, u_s) ds + \sigma^{\mu_s} (\xi_s, u_s) d W_s
\enspace , \end{equation}
where $(W_s)_{s\geq 0}$ is a $d$-dimensional
Brownian motion on a filtered probability space
$(\Omega,\F,(\F_s)_{0\leq s\leq T},P)$.
The control processes $\mu:=(\mu_s)_{0\leq s\leq T}$ and $u:=(u_s)_{0\leq s\leq T}$
take their values in the
sets $\M$ and $\U$ respectively and they are admissible if
they are  progressively measurable with respect to
the filtration $(\F_s)_{0\leq s\leq T}$.
We assume that, for all $m\in\M$,
the maps $f^m: \R^d\times \U\to \R^d$ and
$\sigma^m: \R^d\times \U\to \R^{d\times d}$ are continuous
and satisfy properties implying the existence of the process 
$(\xi_s)_{0\leq s\leq T}$ for any admissible control processes $\mu$ and $u$.

Given an initial time $t\in [0,T]$, the control problem consists in maximizing
the following payoff:
\begin{align*}
J(t, x, \mu, u) :=& \EE \left[ \int_t^T 
e^{ - \int_t^s \delta^{\mu_{\tau}} (\xi_{\tau}, u_{\tau}) d{\tau} }
\ell^{\mu_s} (\xi_s, u_s) ds \right.\\ & \left. \qquad 
+ e^{ - \int_t^T \delta^{\mu_{\tau}} (\xi_{\tau}, u_{\tau}) d{\tau} }
\psi(\xi_T)  \mid \xi_t = x \right] \enspace ,
\end{align*}
where, for all $m\in \M$, $\ell^m: \R^d\times \U\to \R$,
$\delta^m: \R^d\times \U\to [0,+\infty)$,
and $\psi:\R^d\to\R$ are  given continuous maps.
We then define the value function of the problem as the optimal payoff:
$$v(t, x) = \sup_{\mu, u} J(t,x,\mu, u)\enspace ,$$
where the maximization holds over all admissible control processes
$\mu$ and $u$.

Let $\Sy_d$ denotes the set of symmetric $d\times d$ matrices
and let us denote by $\leq$ the Loewner order on $\Sy_d$
($A\leq B$ if $B-A$ is nonnegative).
The Hamiltonian $\Ha:\R^d\times \R \times \R^d\times \Sy_d\to \R$
of the above control problem is defined as:
\begin{subequations}\label{defH}
\begin{align}
\Ha(x, r,p,\Gamma):=&\max_{m\in \M} \Ha^m(x, r,p,\Gamma)\enspace,
\label{defHmax}\end{align}
with
\begin{align}
 \Ha^m(x, r,p,\Gamma):=&\max_{u\in \U}  \Ha^{m,u}(x, r,p,\Gamma) 
\enspace ,\label{defHm}\\
 \Ha^{m,u}(x, r,p,\Gamma):=&
\frac{1}{2} \tr\left(\sigma^m(x,u) \sigma^m(x,u)\tp \Gamma
\right)  +f^m(x,u)\cdot p\nonumber\\
&\qquad -\delta^m(x,u)r +\ell^m(x,u) \enspace.
\label{defHmu}
\end{align}
\end{subequations}

Under suitable assumptions, the value function 
$v:[0,T]\times \R^d\to \R$ is the unique 
(continuous) viscosity solution of
the following Hamilton-Jacobi-Bellman equation
\begin{subequations}\label{HJB}
\begin{align}
&-\frac{\partial v}{\partial t} 
-\Ha(x, v(t,x), Dv(t,x), D^2v(t,x))=0, 
\quad  x\in \R^d,\; t\in [0,T), \label{HJB1}\\
& v(T,x)=\psi(x), \quad x\in \R^d,
\end{align}
\end{subequations}
satisfying also some growth condition at infinity (in space).

In~\cite{touzi2011}, Fahim, Touzi and Warin proposed a probabilistic numerical 
method to solve such fully nonlinear partial 
differential equations~\eqref{HJB}, inspired by their backward stochastic
differential equation interpretation 
given by Cheridito, Soner, Touzi and Victoir
in~\cite{cheridito2007}.
This method consists in two steps, the first one 
beeing a time discretization of the partial differential
equation using the Euler discretization of the stochastic differential
equation of an uncontrolled diffusion (thus different from the controlled one).
The second step of the method is based on the simulation of
the discretized diffusion and linear regression estimations which
can be seen as an alternative to a space discretization.

In~\cite{mceneaney2010,mceneaney2011}, McEneaney, Kaise and Han
proposed an idempotent numerical method which works at least when 
the Hamiltonian with fixed discrete control, $\Ha^m$,
correspond to linear quadratic control problems.
This method is based on the distributivity of the (usual) addition operation
over the supremum (or infimum) operation, and on a property
of invariance of the set of quadratic forms.
It computes in a backward manner the value function $v(t,\cdot)$
at time $t$ as a supremum of quadratic forms.
However, as $t$ decreases, the number of quadratic forms generated 
by the method increases exponentially (and even become infinite
if the Brownian is not discretized in space) and some pruning 
is necessary to reduce the complexity of the algorithm.

In~\cite{fodjo1}, we introduced an algorithm
combining  the  two above methods at least in their spirit.
The algorithm applies the first step (the time discretization) of the
method of~\cite{touzi2011} to the HJB equations obtained when the
discrete control is fixed, then using the simulation of as many
uncontrolled stochastic processes as  discrete controls,
it applies a max-plus type space discretization in the spirit of
the method of~\cite{mceneaney2010,mceneaney2011}.
Then, without any pruning, 
the number of quadratic forms representing the value function is bounded
by the sampling size~\cite{fodjo1}.
Hence, the complexity of the algorithm
is bounded polynomially in the number of discretization time steps
and the sampling size.

The convergence of the probabilistic max-plus algorithm proposed
in~\cite{fodjo1} is based, as for the one of~\cite{touzi2011},
on the monotonicity of the time discretization scheme.
In particular~\cite{touzi2011}, this monotonicity allows one to apply 
the theorem of Barles and Souganidis~\cite{barles90}.
However, for this monotonicity to hold, critical
constraints are imposed on the Hamiltonian: 
the diffusion matrices $\sigma^m(x,u) \sigma^m(x,u)\tp$ need
at the same time to be bounded from below (with respect to the Loewner order)
by a symmetric positive definite matrix $a$ and 
bounded from above by $(1+2/d) a$.
Such a constraint is restrictive, in particular it may not hold
even when the matrices  $\sigma^m(x,u)$ do not depend on $x$ and $u$ but take
different values for $m\in \M$. %
In~\cite{monotone-zhang2015}, Guo, Zhang and Zhuo proposed a monotone
scheme exploiting the diagonal part of the diffusion matrices 
and combining a usual finite
difference scheme to the scheme of~\cite{touzi2011}.
This scheme can be applied in more general situations than the one 
of~\cite{touzi2011}, but still does not work for general 
control problems.
In~\cite{fodjo2}, we proposed a new probabilistic discretization scheme 
of the second order derivatives which allowed us to obtain
the monotonicity of the time discretization of HJB equations~\eqref{HJB}
with bounded coefficients and an ellipticity condition. Indeed,
the monotonicity holds when the first
order terms of the HJB equation are dominated by the second order ones.

Here, we  propose a new probabilistic scheme for the first order derivatives
which is in the spirit of the upwind discretizations used by Kushner for
optimal control problems, see for instance~\cite{kushner2}.
This allows one to solve also degenerate equations or to use 
time discretizations based on the simulation of a diffusion with same 
variance as the controlled process.

As soon as the convergence of the algorithm holds, 
one may expect to obtain estimates on the error leading to 
bounds on the complexity as a function of the error.
Both depend on the error of the time discretization on
the one hand, and the error of the ``space discretization'' on the other hand.
We shall only study here the error of the time discretization,
for which we obtain error estimates similar
to the ones in~\cite{touzi2011}, using the results of
Barles and Jakobsen~\cite{jakobsen07}.
We shall also show how to adapt the method of~\cite{fodjo1,fodjo2}
with the new time discretization scheme.

The paper is organized as follows.
In Section~\ref{sec-touzi}, we recall the scheme of~\cite{touzi2011}.
Then, monotone probabilistic discretizations of second order and first 
order derivatives are presented in Section~\ref{sec-derivatives},
with error estimates for regular functions. 
These discretizations and error estimates 
are applied to Hamilton-Jacobi-equations 
in Section~\ref{sec-monotone}, for which the error on a bounded
Lipschitz solution is obtained by using the results of 
Barles and Jakobsen~\cite{jakobsen07}. 
In Section~\ref{sec-maxplus}, we recall the algorithm of~\cite{fodjo1,fodjo2}
and show how it can be combined with the scheme of
Section~\ref{sec-monotone}.

\section{The probabilistic time discretization of Fahim, Touzi and Warin}
\label{sec-touzi}

Let us first recall the first step of the 
probabilistic numerical scheme proposed by
Fahim, Touzi and Warin in~\cite{touzi2011},
which can be viewed as a time dicretization.

Let $h$ be a time discretization step such that $T/h$ is an integer.
We denote by $\T_h=\{0,h,2h,\ldots, T-h\}$ and $\bT_h=\{0,h,2h,\ldots, T\}$
the set of discretization times
of $[0,T)$ and $[0,T]$ respectively.
Let $\Ha$ be any hamiltonian of the form~\eqref{defH}.
Let us decompose $\Ha$ as the sum of the (linear) generator 
$\Li$ of a diffusion (with no control) and of
a nonlinear elliptic Hamiltonian $\G$, that is
$\Ha=\Li+\G$ with 
\begin{align*}
 \Li(x, ,r, p,\Gamma)= \Li(x, p,\Gamma):=&
\frac{1}{2} \tr\left(a(x)  \Gamma \right) +\underline{f}(x)\cdot p \enspace,
\end{align*}
$a(x)=\underline{\sigma}(x) \underline{\sigma}(x)\tp$ 
and $\G$ such that $a(x)$ is positive definite and
$\partial_{\Gamma} {\G}$ is positive semidefinite, for all $x\in\R^d,\;
r\in\R, p\in \R^d, \Gamma\in \Sy_d$.
Denote by $\hat{X}$
the Euler discretization of the diffusion with generator $\Li$:
\begin{equation}
\hat{X}(t+h)=
 \hat{X}(t)+ \underline{f}(\hat{X}(t))h+
\underline{\sigma}(\hat{X}(t)) (W_{t+h} -W_t)\enspace .
\label{xhat} \end{equation}
The time discretization of \eqref{HJB} proposed in~\cite{touzi2011} 
has the following form:
\begin{equation}\label{scheme}
 v^h(t,x)=T_{t,h}(v^h(t+h,\cdot))(x),\quad t\in\T_h\enspace,\end{equation}
with %
\begin{equation}
T_{t,h}(\phi)(x)= %
 \DD_{t,h}^{0}(\phi)(x)
+ h \G(x, \DD_{t,h}^{0}(\phi)(x),\DD_{t,h}^{1}(\phi)(x),\DD_{t,h}^{2}(\phi)(x))
\enspace ,
\label{def-th-m}
\end{equation} %
where, for $i=0,1,2$, 
$ \DD_{t,h}^i(\phi)$ is the approximation of the $i$th differential
of $e^{h\Li}\phi$
obtained using the following scheme:
\begin{subequations}\label{defDcal}
\begin{align}
 \DD_{t,h}^i(\phi)(x)&=\EE(D^i\phi(\hat{X}(t+h))\mid \hat{X}(t)=x)\\
&=\EE(\phi(\hat{X}(t+h))
\Po^i_{t,x,h}(W_{t+h}-W_t) \mid \hat{X}(t)=x)\label{defDcal2}
\enspace ,
\end{align}
\end{subequations}
where, $D^i$ denotes the $i$th differential operator,
and for all $t,x,h,i$, $\Po^i_{t,x,h}$ is
the polynomial of degree $i$  in the variable $w\in \R^d$
given by:
\begin{subequations}\label{defpol}
\begin{align}
 \Po^0_{t,x,h}(w)&=1\enspace,\\ \Po^1_{t,x,h}(w)&=(\underline{\sigma}(x)\tp)^{-1}h^{-1} w\enspace, \label{defpol2}\\
 \Po^2_{t,x,h}(w)&=(\underline{\sigma}(x)\tp)^{-1} h^{-2}(w w\tp-h I)
(\underline{\sigma}(x))^{-1}  \label{defpol3}
\enspace ,
\end{align}
\end{subequations}
where $I$ is the $d\times d$ identity matrix.
Note that the second equality in~\eqref{defDcal} holds 
for all $\phi$ with exponential
growth~\cite[Lemma 2.1]{touzi2011}.

In~\cite{touzi2011}, the convergence of the time discretization 
scheme~\eqref{scheme} is 
proved by using the theorem of Barles and Souganidis of~\cite{barles90},
under the above assumptions together with the critical
assumption that
$\partial_{\Gamma} {\G}$ is lower bounded by
some positive definite matrix (for all $x\in\R^d,\;
r\in\R, p\in \R^d, \Gamma\in \Sy_d$)
and that \sloppy
$\tr(a(x)^{-1} \partial_{\Gamma} {\G})\leq 1$.

Indeed, let us say that an operator $T$ between any 
partially ordered sets $\F$ and $\F'$
of real valued functions (for instance the set of bounded functions 
from some set $\Omega$ to $\R$, or $\R^n$)
is \NEW{$L$-almost monotone}, for some constant $L\geq 0$, if 
\begin{equation}\label{amonotone}
\phi,\psi\in \F,\; 
 \phi\leq \psi \implies T(\phi)\leq T(\psi)+
L \sup(\psi-\phi)\enspace ,\end{equation}
and that it is \NEW{monotone}, when this holds for $L=0$.

The above conditions together with the boundedness of 
$\partial_{p} {\G}$ are used to show (in Lemma 3.12 and 3.14
of~\cite{touzi2011}) that the operator
 $T_{t,h}$ is a $Ch$-almost monotone operator
over the set of Lipschitz continuous functions from $\R^d$ to $\R$.
Then, this property, together with other technical assumptions, are used
to obtain the assumptions of the theorem of Barles and Souganidis 
of~\cite{barles90}, and also estimates in the same spirit as in~\cite{jakobsen07}.

In~\cite{fodjo1}, we proposed to bypass the critical constraint, by
assuming that the Hamiltonians $\Ha^m$ (but not necessarily $\Ha$)
satisfy the critical constraint, 
and applying the above scheme to the Hamiltonians $\Ha^m$.

In~\cite{fodjo2}, we proposed an approximation of 
$\EE(D^2\phi(\hat{X}(t+h))\mid \hat{X}(t)=x)$ or $D^2 \phi(x)$
that we recall in the next section. It is
expressed as a conditional expectation as in~\eqref{defDcal2} but
depend on the derivatives of $\G$ with respect to $\Gamma$
at the given point,
via the matrices $\sigma^m(x,u)$ of the control problem.
Below, we also propose an approximation of 
$\EE(D\phi(\hat{X}(t+h))\mid \hat{X}(t)=x)$ or $D \phi(x)$ 
which is monotone in itself and thus allows one to consider the case where
the derivatives of $\G$ with respect to $\Gamma$ are zero or degenerate 
nonnegative matrices.

\section{Monotone probabilistic approximation of first and second order derivatives and their estimates}
\label{sec-derivatives}
We first describe the approximation of the second order derivatives 
proposed in~\cite{fodjo2}. 
Consider any matrix $\Sigma\in \R^{d\times \ell}$ 
with $\ell\in\N$ and let us denote by $\Sigma_{.j},\; j=1,\ldots \ell$,
its columns.
We denote by $\C^{k}([0,T]\times \R^d)$ or simply $\C^{k}$
the set of functions from $[0,T]\times\R^d$ to $\R$ 
with continuous partial derivatives up to order $k$ in $t$ and $x$,
and by $\C^{k}\bo ([0,T]\times \R^d)$ or $\C^{k}\bo$ the subset of functions with bounded
such derivatives.
Then, for any $v\in \C^2$, we have 
\begin{eqnarray}
\frac{1}{2}\tr( \underline{\sigma}(x)\Sigma\Sigma\tp\underline{\sigma}\tp(x)\, D^2v(t,x))&=&
\frac{1}{2}\sum_{j=1}^\ell\Sigma_{\cdot j}\tp\underline{\sigma}\tp(x)\, D^2v(t,x)\underline{\sigma}(x)\Sigma_{\cdot j}\enspace .\label{second-order}
\end{eqnarray}
For any integer $k$, consider the polynomial:
\begin{subequations}\label{defpoly2k}
\begin{gather}
 \Po^2_{\Sigma,k}(w)= \sum_{j=1}^{\ell}\|\Sigma_{.j}\|_2^{2}
\left(c_k \left(\frac{[\Sigma\tp w]_{j}}{\|\Sigma_{.j}\|_2}\right)^{4k+2}
-d_k\right)\enspace ,
\end{gather} with
\begin{gather}
 c_k:= \frac{1}{(4k+2)\EE\left[N^{4k+2}\right]}\enspace ,\quad
d_k:=\frac{1}{4k+2}\enspace ,
\end{gather}
\end{subequations}
where $N$ is a one dimensional normal random variable,
and where we use the convention that the $j$th term of the sum is zero when
$\|\Sigma_{.j}\|_2=0$.
This is the sum of the same expression defined for each column 
$\Sigma_{.j}$ instead of $\Sigma$.

Let $v\in\C^{4}\bo$, and $\hat{X}$ as in~\eqref{xhat},
then, the following expression is an approximation of~\eqref{second-order}
with an error in $O(h)$ uniform in $t$ and $x$~\cite[Th.\ 1]{fodjo2}:
\begin{gather}
h^{-1}\EE\left[v(t+h,\hat{X}(t+h)) \Po^2_{\Sigma,k}(h^{-1/2}(W_{t+h}-W_t))
 \mid \hat{X}(t)=x\right] \enspace . \label{approx-second-order}
\end{gather}

In order to obtain error estimates, we need the more precise following result.
For $p$ and $q$ two integers and $\phi$ a function from  $[0,T] \times \R^d$
to $\R$ with partial derivatives up to order $p$ in $t$ and  $q$ in $x$,
 we introduce the following notation :
\begin{align*}
|\partial^{p}_t D^{q} \phi| = \underset{\substack{
{(t,x) \in [0,T] \times \R^d} \\ 
{(\beta_i)_i \in \N^d, \sum_i \beta_i = q}}}{\sup} \left|\frac{\partial^{i+q} \phi}{\partial t^p \partial x_1^{\beta_1} \ldots \partial x_d^{\beta_d}}(t,x)\right|
\end{align*}
In the sequel, $\|\cdot\|$ will denote any norm on $\R^{d}$ or
on  $\R^{d\times d}$. Also $[x]_i$ will denote the $i$th coordinate 
of any vector $x\in\R^d$, and $[A]_{ij}$ will denote the $(i,j)$ entry of 
any matrix $A\in \R^{d\times \ell}$.

\begin{theorem}
\label{theo-er3}
Let $\hat{X}$ as in~\eqref{xhat}, and denote $W^t_h=W_{t+h}-W_t$.
Consider any matrix 
$\Sigma\in \R^{d\times \ell}$ with $\ell\leq d$.
Assume that $\fu$ and $\sigmau$ are bounded by some constant $C$ uniformely
in ($t$ and) $x$, and let $M$ be an upper bound of
$\|\Sigma \Sigma\tp\|$.
Then, there exists $K=K(C,M) > 0$ 
such that, for all $v \in \C^4\bo ([0,T]\times \R^d)$, we have,
for all $(t,x)\in \T_h\times \R^d$,
\begin{align*}
&\Bigg| h^{-1}\EE\left[v(t+h,\hat{X}(t+h)) \Po^2_{\Sigma,k}(h^{-1/2}W^t_h)
 \mid \hat{X}(t)=x\right]\nonumber  \\
&\quad -\frac{1}{2}\tr( \underline{\sigma}(x)\Sigma\Sigma\tp\underline{\sigma}\tp(x)\, D^2v(t,x)) \Bigg| \\
&\le K (1+\sqrt{h})^4
\Big[h (|\partial^{1}_t D^{2} v| + |\partial^{0}_t D^{3} v| + |\partial^{0}_t D^{4} v|) + \\
&\quad h\sqrt{h} |\partial^{1}_t D^{3} v| + h^2 |\partial^{2}_t D^{2} v| + h^2\sqrt{h} |\partial^{3}_t D^{1} v| + h^3 |\partial^{4}_t D^{0} v| \Big]\enspace . 
\end{align*}
\end{theorem}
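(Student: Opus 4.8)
The plan is to carry out a Taylor expansion of $v(t+h,\hat X(t+h))$ around the point $(t,x)$, substitute into the conditional expectation, and then compute moments of the polynomial $\Po^2_{\Sigma,k}$ against the Gaussian increment, matching the leading term to the target $\frac12\tr(\underline\sigma(x)\Sigma\Sigma\tp\underline\sigma\tp(x)D^2v(t,x))$ and bounding every remainder. Concretely, I would first reduce to the scalar (single-column) case: since $\Po^2_{\Sigma,k}(w)=\sum_j \Po^2_{\Sigma_{.j},k}(w)$ is a sum over columns and the right-hand side~\eqref{second-order} decomposes the same way, it suffices to prove the estimate for $\ell=1$ with $\Sigma$ a single vector, at the cost of a factor $\ell\le d$ absorbed into $K$. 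Write $\hat X(t+h)=x+\fu(x)h+\sigmau(x)W^t_h$ and set $\Delta=\hat X(t+h)-x$, $G=h^{-1/2}W^t_h$ (a standard $d$-dimensional Gaussian under the conditional law), so $\Delta = \fu(x)h+\sqrt h\,\sigmau(x)G$.

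Next I would Taylor-expand $v(t+h,\hat X(t+h))$ to fourth order jointly in the time increment $h$ and the space increment $\Delta$, keeping the explicit polynomial part and writing the remainder with an integral form bounded by $|\partial^p_t D^q v|$ for $p+q$ large enough that the associated power of $h$ (counting $h$ for a time derivative and $\sqrt h$ for each space derivative, since $\|\Delta\|$ is of order $\sqrt h$ plus an $O(h)$ drift) hits the orders listed in the statement. Then substitute this expansion into $h^{-1}\EE[\,\cdot\;\Po^2_{\Sigma,k}(G)\,]$. The key algebraic fact, which is exactly the design of the polynomial~\eqref{defpoly2k} (and is the content of~\cite[Th.\ 1]{fodjo2}), is that $\EE[\Po^2_{\Sigma,k}(G)]=0$, that $\EE[G_iG_j\,\Po^2_{\Sigma,k}(G)]$ reproduces $\Sigma\Sigma\tp$ in the appropriate contraction (so that $\frac12\EE[(\sigmau(x)G)\tp D^2v\,(\sigmau(x)G)\,\Po^2_{\Sigma,k}(G)]=\frac12\tr(\underline\sigma\Sigma\Sigma\tp\underline\sigma\tp D^2v)$), and that the odd moments $\EE[G_i\,\Po^2_{\Sigma,k}(G)]$ vanish by symmetry. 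After multiplying by $h^{-1}$, the zeroth-order Taylor term drops out (moment zero), the first-order term in $\Delta$ contributes only through its $\sqrt h\,\sigmau(x)G$ part times an even moment — which vanishes — leaving the drift$\times$odd-moment pieces of order $h$, the pure second-order term in $\Delta$ gives the wanted leading term plus $O(h)$ corrections (from the $\fu(x)h$ cross terms and from the $h$ factor already present), and all higher Taylor terms are $O(h)$ or smaller. Throughout, the Gaussian moments of $\Po^2_{\Sigma,k}(G)$ of every order are finite and bounded by constants depending only on $M$ (via $\|\Sigma\Sigma\tp\|\le M$) and the fixed integer $k$; the $(1+\sqrt h)^4$ factor arises from collecting the $\|\Delta\|^{\,\le 4}$ bounds as $(|\fu(x)|h+\|\sigmau(x)\|\sqrt h\,\|G\|)^{\le 4}\le C'(1+\sqrt h)^4(1+\|G\|)^4$ with $C'=C'(C)$.

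The main obstacle is the bookkeeping in the fourth-order Taylor remainder: one must check that each residual monomial, after pairing the power of $h$ from the time derivatives with the power of $\sqrt h$ from the spatial increment and then dividing by $h$, lands at or above the exact list $h,\;h\sqrt h,\;h^2,\;h^2\sqrt h,\;h^3$ attached to $|\partial^1_tD^3v|,|\partial^2_tD^2v|,|\partial^3_tD^1v|,|\partial^4_tD^0v|$ — and that the mixed-moment cancellations genuinely kill the would-be $O(1)$ and $O(\sqrt h)$ terms so that nothing survives below order $h$. This is where the precise statement (as opposed to the qualitative $O(h)$ of~\cite[Th.\ 1]{fodjo2}) requires care: I would organize the remainder as a finite sum indexed by $(p,q)$ with $p+q\le 4$ and $q\ge$ (number of spatial derivatives), bound each term by $|\partial^p_t D^q v|$ times $h^{-1}\EE[\,|h|^p\,\|\Delta\|^q\,|\Po^2_{\Sigma,k}(G)|\,]\le K_{p,q}(C,M)\,h^{-1}h^p h^{q/2}(1+\sqrt h)^q$, and verify term by term that $p+q/2-1$ matches the claimed exponent; the one subtlety is that the genuinely second-order-in-space, zeroth-order-in-time term must be expanded to one more order to extract the exact leading coefficient and expose its $O(h)$ error, which is the source of the $h\,|\partial^0_tD^3v|$ and $h\,|\partial^0_tD^4v|$ contributions.
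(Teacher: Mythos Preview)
Your proposal is correct and follows essentially the same route as the paper: a joint Taylor expansion of $v(t+h,\hat X(t+h))$ around $(t,x)$, computation of the Gaussian moments of $\Po^2_{\Sigma,k}$ (using the column-by-column structure, which the paper phrases as a unitary rotation sending $[\Sigma\tp G]_j/\|\Sigma_{.j}\|_2$ to a standard normal coordinate), and then bounding the fourth-order remainder term by term using finiteness of Gaussian moments. The only cosmetic difference is that the paper packages the exact decomposition---leading term, the two explicit $O(h)$ terms coming from $\partial_t D^2 v$ and from $\fu\cdot D^3 v$, and the integral-form remainder $M^4$---into a separate lemma (Lemma~\ref{theo-er2}) before reading off the estimate, whereas you describe the same computation inline.
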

\begin{skproof}
The proof follows from the following lemma and the property that
$h^{-1/2}W^t_{h}$ is a normal random vector and that normal
random variables have all their moments finite. 
\end{skproof}

\begin{lemma}
\label{theo-er2}
Let $v$, $W^t_h$ and $\Sigma$ be as in Theorem~\ref{theo-er3}.
We have, for all $(t,x)\in \T_h\times \R^d$,
\begin{align*}
&h^{-1}\EE\left[v(t+h,\hat{X}(t+h)) \Po^2_{\Sigma,k}(h^{-1/2}W^t_h)
 \mid \hat{X}(t)=x\right] \\
&=\frac{1}{2}\tr( \underline{\sigma}(x)\Sigma\Sigma\tp\underline{\sigma}\tp(x)\, D^2v(t,x)) + \frac{h}{2}\tr( \underline{\sigma}(x)\Sigma\Sigma\tp\underline{\sigma}\tp(x)\, 
\frac{\partial D^2v(t,x)}{\partial t}) \\
&\quad +\frac{h}{2}\sum_{i,j,p}\left(
\frac{\partial^3v}{\partial x_i \partial x_j \partial x_p}(t,x)
[\underline{\sigma}(x)\Sigma\Sigma\tp\underline{\sigma}\tp(x)]_{ij}
[\fu(x)]_p\right) \\
&\quad + \EE\left[M^4(v,t,x,W^t_h)\Po^2_{\Sigma,k}(h^{-1/2}W^t_h )\right]\enspace , 
\end{align*}
where there exists $(s,\xi)$ (random) equal to a convex combination of 
$(t,x)$ and $(t+h, x+\fu(x) h+\sigmau(x)W^t_h)$ such that:
\begin{align*}
M^4(v,t,x,W^t_h) = &\frac{h^3}{24} \frac{\partial^4 v}{\partial t^4}(s,\xi) \\
                           & + \frac{h^2}{6} \sum_{i=1}^d \frac{\partial^4 v}{\partial t^3\partial x_i}(s,\xi)[\fu(x)h+\sigmau(x) W^t_h]_i \\
                           & +  \frac{h}{4} \sum_{i,j} \frac{\partial^4 v}{\partial t^2\partial x_i \partial x_j}(s,\xi)[\fu(x)h+\sigmau(x) W^t_h]_i [\fu(x)h+\sigmau(x) W^t_h]_j\\
                           & + \frac{1}{6} \sum_{i,j,p} \frac{\partial^4 v}{\partial t\partial x_i \partial x_j \partial x_p}(s,\xi)[\fu(x)h+\sigmau(x) W^t_h]_i [\fu(x)h+\sigmau(x) W^t_h]_j \\
                           &\qquad [\fu(x)h+\sigmau(x) W^t_h]_p\\
                           & + \frac{1}{24h} \sum_{i,j,p,q} \frac{\partial^4 v}{\partial x_i \partial x_j \partial x_p \partial x_q}(s,\xi)[\fu(x)h+\sigmau(x) W^t_h]_i [\fu(x)h+\sigmau(x) W^t_h]_j \\
                           &\qquad [\fu(x)h+\sigmau(x) W^t_h]_p [\fu(x)h+\sigmau(x) W^t_h]_q\enspace .
\end{align*}
\end{lemma}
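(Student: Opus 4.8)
The plan is to substitute into the left-hand side the Taylor expansion of $v$ to order three around $(t,x)$ and then integrate term by term against the even, calibrated polynomial $\Po^2_{\Sigma,k}(h^{-1/2}W^t_h)$, using only that $h^{-1/2}W^t_h$ is a standard $d$-dimensional Gaussian vector.

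\emph{Step 1: Taylor expansion.} Conditioning on $\hat X(t)=x$, write $\hat X(t+h)=x+\Delta$ with $\Delta:=\fu(x)h+\sigmau(x)W^t_h$. By the Taylor--Lagrange formula applied along the segment from $(t,x)$ to $(t+h,x+\Delta)$ — legitimate since $v\in\C^4\bo$ and $t+h\le T$ — there is a (random) point $(s,\xi)$ on that segment with
\[
v(t+h,\hat X(t+h)) = \sum_{n=0}^{3}\frac{1}{n!}\Bigl(h\,\partial_t+\sum_{i}\Delta_i\,\partial_{x_i}\Bigr)^{n}v(t,x)\;+\;h\,M^4(v,t,x,W^t_h),
\]
where $h\,M^4$ is the order-four Lagrange remainder $\tfrac1{4!}(h\partial_t+\sum_i\Delta_i\partial_{x_i})^4v$ evaluated at $(s,\xi)$; expanding the fourth power by the multinomial formula and dividing by $h$ yields precisely the displayed expression for $M^4(v,t,x,W^t_h)$. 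Multiplying by $h^{-1}\Po^2_{\Sigma,k}(h^{-1/2}W^t_h)$ and taking conditional expectations, the remainder produces the last term of the statement (all expectations are finite since $v$ grows at most polynomially and $W^t_h$ is Gaussian), so it remains to compute $h^{-1}\EE[\,p_n\,\Po^2_{\Sigma,k}(h^{-1/2}W^t_h)\,]$ for the Taylor terms $p_n=\tfrac1{n!}(h\partial_t+\sum_i\Delta_i\partial_{x_i})^nv(t,x)$, $n=0,1,2,3$.

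\emph{Step 2: Gaussian moment identities.} Write $W^t_h=\sqrt h\,Z$ with $Z$ standard Gaussian in $\R^d$. I shall use: (i) $\EE[\Po^2_{\Sigma,k}(Z)]=0$, which is exactly the calibration $d_k=c_k\EE[N^{4k+2}]$; (ii) $\EE[Z_{i_1}\cdots Z_{i_m}\Po^2_{\Sigma,k}(Z)]=0$ for every odd $m$, because $\Po^2_{\Sigma,k}$ is an even polynomial and $Z$ and $-Z$ have the same law; and (iii) $\EE[Z_iZ_j\Po^2_{\Sigma,k}(Z)]=[\Sigma\Sigma\tp]_{ij}$. Identity (iii) is the heart of the matter: for each column $m$ with $\|\Sigma_{.m}\|_2\neq0$, put $Y_m:=\Sigma_{.m}\tp Z/\|\Sigma_{.m}\|_2$, which is a standard normal, and decompose $Z_i=\rho_i^{(m)}Y_m+\tilde Z_i$ with $\rho_i^{(m)}=\Sigma_{im}/\|\Sigma_{.m}\|_2$ and $\tilde Z_i$ jointly Gaussian with $Y_m$, uncorrelated hence independent of it; then $\EE[Z_iZ_jY_m^{4k+2}]=\rho_i^{(m)}\rho_j^{(m)}\EE[N^{4k+4}]+(\,\delta_{ij}-\rho_i^{(m)}\rho_j^{(m)}\,)\EE[N^{4k+2}]$. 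Using $c_k\EE[N^{4k+2}]=d_k=1/(4k+2)$ and $\EE[N^{4k+4}]/\EE[N^{4k+2}]=4k+3$, the $\delta_{ij}$ contributions cancel and $c_k\EE[Z_iZ_jY_m^{4k+2}]-d_k\delta_{ij}=\rho_i^{(m)}\rho_j^{(m)}$; multiplying by $\|\Sigma_{.m}\|_2^2$ gives $\Sigma_{im}\Sigma_{jm}$, and summing over $m$ (zero columns contributing $0$ by the stated convention) yields $[\Sigma\Sigma\tp]_{ij}$. Consequently $\EE[[\sigmau(x)W^t_h]_i[\sigmau(x)W^t_h]_j\,\Po^2_{\Sigma,k}(h^{-1/2}W^t_h)]=h\,[\sigmau(x)\Sigma\Sigma\tp\sigmau(x)\tp]_{ij}$, whereas any monomial in the components of $\Delta$ that contains an odd number, or none, of factors of $W^t_h$ has zero expectation against $\Po^2_{\Sigma,k}(h^{-1/2}W^t_h)$, by (ii) and (i) respectively.

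\emph{Step 3: collecting terms.} Expand $\Delta_i=[\fu(x)]_ih+[\sigmau(x)W^t_h]_i$ in each $p_n$ and keep, by Step 2, only the monomials with exactly two factors of $W^t_h$. The $n=0$ and $n=1$ terms vanish entirely. For $n=2$, only $\tfrac12\sum_{ij}\partial_{x_ix_j}v(t,x)\,\Delta_i\Delta_j$ survives, producing $h^{-1}\cdot\tfrac12\sum_{ij}\partial_{x_ix_j}v(t,x)\cdot h[\sigmau(x)\Sigma\Sigma\tp\sigmau(x)\tp]_{ij}=\tfrac12\tr(\sigmau(x)\Sigma\Sigma\tp\sigmau(x)\tp D^2v(t,x))$. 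For $n=3$, the term $\tfrac12\sum_{ij}\partial_t\partial_{x_ix_j}v(t,x)\,h\,\Delta_i\Delta_j$ gives $\tfrac h2\tr(\sigmau(x)\Sigma\Sigma\tp\sigmau(x)\tp\,\partial_tD^2v(t,x))$, while $\tfrac16\sum_{ijp}\partial_{x_ix_jx_p}v(t,x)\,\Delta_i\Delta_j\Delta_p$ contributes three terms that are equal after relabelling (by the symmetry of $D^3v$), namely $\tfrac h2\sum_{ijp}\partial_{x_ix_jx_p}v(t,x)\,[\sigmau(x)\Sigma\Sigma\tp\sigmau(x)\tp]_{ij}[\fu(x)]_p$. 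Adding these to the remainder term from Step 1 gives the asserted identity; since every discarded monomial contributes exactly $0$, no additional error appears. The main obstacle is identity (iii) of Step 2 — verifying that $c_k$ and $d_k$ are precisely tuned so that, under Gaussian averaging, $\Po^2_{\Sigma,k}$ extracts the second-order increment $\Sigma\Sigma\tp$ without bias; the surrounding computations are routine bookkeeping of which Taylor monomials survive.
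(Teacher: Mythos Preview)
Your proof is correct and follows essentially the same strategy as the paper's: Taylor-expand $v$ around $(t,x)$ with Lagrange remainder of order four, then integrate each term against $\Po^2_{\Sigma,k}(h^{-1/2}W^t_h)$ using Gaussian moment identities. The paper's sketch phrases the key identity (your (iii)) via a unitary change of basis---completing each $\Sigma_{.m}/\|\Sigma_{.m}\|_2$ to an orthonormal frame so that $U\tp Z$ is standard normal with $j$th coordinate $Y_m$---while you obtain the same conclusion by the orthogonal projection $Z_i=\rho_i^{(m)}Y_m+\tilde Z_i$; these are equivalent formulations of the same computation, and the remaining bookkeeping coincides.
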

\begin{skproof}
Apply a Taylor expansion of $v$ around $(t,x)$ and use the property
that for each $j$, there exists a unitary matrix $U$ with 
$j$th column equal to $ \Sigma_{.j}/\|\Sigma_{.j}\|_2$, 
so that
$U\tp (h^{-1/2}W^t_{h})$ is a $d$-dimensional normal random vector
with $j$th coordinate equal to 
$[\Sigma\tp (h^{-1/2}W^t_{h})]_j/\|\Sigma_{.j}\|_2$. 
\end{skproof}

Let us also introduce the following approximation 
of the first order derivatives. For any vector $\gu\in \R^{d}$, 
consider the piecewise linear function $\Po^1_{\gu}$ on $\R^d$ :
\begin{align}\label{poly1}
\Po^1_{\gu}(w) =& 2( \gu_+ \cdot w_+ + \gu_- \cdot w_- )\enspace ,
\end{align}
where for any vector $\mu \in \R^d$, 
 $\mu_+, \mu_- \in \R^d$ are defined such that $[\mu_+]_i = \max([\mu]_i,0)$, 
$[\mu_-]_i = -\min([\mu]_i,0)$. Note that $\Po^1_{\gu}$ is
nonnegative. We shall show that
\begin{equation}\label{discret-ordre1-der}
\EE\Big[(v(t+h, \hat{X}(t+h))-v(t,x))\Po^1_{\gu}(h^{-1}W^t_h) \Big]\end{equation}
is a monotone approximation of
\[ (\sigmau(x)\gu) \cdot Dv (x)\enspace .\]
Before this, let us 
note that if $\sigmau(x)=1$, $\fu(x)=1$ and $h^{-1/2}W^t_h$ is discretized
by a random variable taking the values $1$ and $-1$ with probability $1/2$,
then the discretization $\DD_{t,h}^i(v(t+h,\cdot))(x)$
defined in~\eqref{defDcal2} is equivalent to 
a centered discretization of $Dv(x)$ with space step $\Delta x=h^{1/2}$,
whereas~\eqref{discret-ordre1-der} corresponds to the Kushner (upwind)
discretization~\cite{kushner2}
\[\sum_{i=1}^d \left[ [\gu_i]_+  \frac{v(t+h,x+h^{1/2} e_i)-v(t,x)}{h^{1/2}}
+[\gu_i]_-  \frac{v(t+h,x-h^{1/2} e_i)-v(t,x)}{h^{1/2}} \right]\enspace .\]

Using the same proof arguments as above we obtain the following results,
where Theorem~\ref{theo-firstorder} uses Lemma~\ref{lemma-firstorder}.

\begin{theorem}\label{theo-firstorder}
Let $\hat{X}$ as in~\eqref{xhat}, and denote $W^t_h=W_{t+h}-W_t$.
Consider any vector $\gu\in \R^{d}$.
Assume that $\fu$ and $\sigmau$ are bounded by some constant $C$ uniformely
in ($t$ and) $x$, and let $M$ be an upper bound of $\|\gu\|$.
Then, there exists $K=K(C,M) > 0$ 
such that, for all $v \in \C^2\bo ([0,T]\times \R^d)$, we have,
for all $(t,x)\in \T_h\times \R^d$,
\begin{gather*}
\left|(\sigmau(x)\gu) \cdot Dv- \EE\Big[(v(t+h, \hat{X}(t+h))-v(t,x))\Po^1_{\gu}(h^{-1}W^t_h) \Big]\right| \\
\le K (1+\sqrt{h})^2 \Big[\sqrt{h}(|\partial^{1}_t D^{0} v| + |\partial^{0}_t D^{1} v| + |\partial^{0}_t D^{2} v|) \\
+ h(|\partial^{1}_t D^{1} v|) + h\sqrt{h} |\partial^{2}_t D^{0} v| \Big]\enspace .
\end{gather*} 
\end{theorem}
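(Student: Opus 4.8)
The plan is to mimic the two-step structure of Lemma~\ref{theo-er2}--Theorem~\ref{theo-er3}: first an exact Taylor identity (the content of the auxiliary Lemma~\ref{lemma-firstorder}), then a bound of the remainder using finiteness of the Gaussian moments. Write $Z:=W^t_h$, so that conditionally on $\hat X(t)=x$ one has $\hat X(t+h)=x+\fu(x)h+\sigmau(x)Z$ with $Z\sim\mathcal N(0,hI)$ a vector with independent symmetric coordinates, and note that $\Po^1_{\gu}$ is positively $1$-homogeneous, so $\Po^1_{\gu}(h^{-1}Z)=h^{-1}\Po^1_{\gu}(Z)$. A first-order Taylor expansion of $v$ at $(t,x)$ with second-order Lagrange remainder gives
\[ v(t+h,\hat X(t+h))-v(t,x)= h\,\partial_t v(t,x)+Dv(t,x)\cdot(\fu(x)h+\sigmau(x)Z)+M^2(v,t,x,Z), \]
where $M^2$ is the order-two remainder evaluated at a random convex combination $(s,\xi)$ of $(t,x)$ and $(t+h,x+\fu(x)h+\sigmau(x)Z)$, built from $\partial^2_t v$, $\partial_t D v$ and $D^2v$ exactly as the $M^4$ of Lemma~\ref{theo-er2} is built from the fourth-order derivatives. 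Multiplying by $\Po^1_{\gu}(h^{-1}Z)$ and taking the conditional expectation reduces the whole statement to moment computations for $Z$.

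The consistency of the scheme rests on the single identity
\[ \EE\bigl[(a\cdot Z)\,\Po^1_{\gu}(Z)\bigr]=h\,(a\cdot\gu)\qquad\text{for all }a\in\R^d, \]
which follows by expanding $\Po^1_{\gu}(Z)=2\sum_i([\gu_+]_iZ_i^++[\gu_-]_iZ_i^-)$ and using independence and symmetry of the coordinates of $Z$: the terms $\EE[Z_kZ_i^{\pm}]$ with $k\ne i$ vanish, whereas $\EE[Z_iZ_i^+]=\EE[(Z_i^+)^2]=h/2$ and $\EE[Z_iZ_i^-]=-\EE[(Z_i^-)^2]=-h/2$, so that the $\pm$ parts recombine into $[\gu_+]_i-[\gu_-]_i=\gu_i$. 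Applied with $a=\sigmau(x)\tp Dv(t,x)$ this yields $\EE[(Dv(t,x)\cdot\sigmau(x)Z)\,\Po^1_{\gu}(h^{-1}Z)]=(\sigmau(x)\gu)\cdot Dv(t,x)$, which is precisely the quantity to be approximated; the remaining first-order terms $h\,\partial_t v(t,x)$ and $h\,Dv(t,x)\cdot\fu(x)$ produce, after multiplication by $\Po^1_{\gu}(h^{-1}Z)$ and expectation, contributions of size $\sqrt h\,|\partial^1_t D^0v|$ and $\sqrt h\,|\partial^0_t D^1v|$, since $\EE[\Po^1_{\gu}(h^{-1}Z)]=h^{-1}\EE[\Po^1_{\gu}(Z)]$ is of order $h^{-1/2}$.

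It remains to bound $\EE[M^2(v,t,x,Z)\,\Po^1_{\gu}(h^{-1}Z)]$. As in Theorem~\ref{theo-er3} the random base point $(s,\xi)$ cannot be pulled out, so each second-order derivative in $M^2$ is bounded by the corresponding $|\partial^p_tD^qv|$ with $p+q=2$, leaving expectations of the type $\EE[|[\fu(x)h+\sigmau(x)Z]_i|\,|[\fu(x)h+\sigmau(x)Z]_j|\,\Po^1_{\gu}(h^{-1}Z)]$; using $|[\fu(x)h+\sigmau(x)Z]_i|\le Ch+C\|Z\|$, $\Po^1_{\gu}(h^{-1}Z)\le 2h^{-1}\|\gu\|\,\|Z\|$ (up to norm constants) and the finiteness of all moments of $h^{-1/2}Z$, one obtains the three weights $\sqrt h\,|\partial^0_tD^2v|$, $h\,|\partial^1_tD^1v|$, $h\sqrt h\,|\partial^2_tD^0v|$, the differing powers of $h$ coming from the fact that the time increment $h$ is one order of $\sqrt h$ smaller than each spatial increment $[\sigmau(x)Z]_i$; the common factor $(1+\sqrt h)^2$ is extracted from the bound $Ch+C\|Z\|\le C(1+\sqrt h)\sqrt h\,(1+\|h^{-1/2}Z\|)$. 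The main obstacle is exactly this last bookkeeping: one must track the time-versus-space scaling of the increment separately, rather than bound it uniformly, in order to recover the sharp order-dependent weights on the right-hand side; the consistency identity itself is short, because $\Po^1_{\gu}$ is defined coordinate by coordinate, so no rotation argument (unlike the one used for Lemma~\ref{theo-er2}) is needed, only independence of the coordinates of $Z$.
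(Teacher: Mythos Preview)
Your proposal is correct and follows essentially the same approach as the paper: the paper states only that Theorem~\ref{theo-firstorder} follows from Lemma~\ref{lemma-firstorder} ``using the same proof arguments as above'' (i.e., as in Lemma~\ref{theo-er2}--Theorem~\ref{theo-er3}), and you have correctly reconstructed both steps, including the key coordinate-wise Gaussian identity $\EE[(a\cdot Z)\,\Po^1_{\gu}(Z)]=h\,(a\cdot\gu)$ and the remainder bookkeeping. Your remark that no rotation is needed here (unlike in Lemma~\ref{theo-er2}) because $\Po^1_{\gu}$ acts coordinate by coordinate is also on point.
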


\begin{lemma}\label{lemma-firstorder} 
Let $v$, $W^t_h$ and $\gu$ be as in Theorem~\ref{theo-firstorder}.
For all $(t,x)\in \T_h\times \R^d$,
there exists $(s,\xi)$ (random) equal to a convex combination of 
$(t,x)$ and $(t+h, x+\fu(x) h+\sigmau(x)W^t_h)$ such that:
\begin{align*}
(\sigmau(x)\gu) \cdot Dv &= 2\EE\Big[(v(t+h, \hat{X}(t+h))-v(t,x))\Po^1_{\gu}(h^{-1}W^t_h) \Big] \\
 & - 2 h (\frac{\partial v}{\partial t}(t,x) + \fu(x) \cdot Dv(t,x)) 
\EE[\Po^1_{\gu}(h^{-1}W^t_h)]\\
 &-h^2\EE[\frac{\partial^2 v}{\partial t^2}(s,\xi) \Po^1_{\gu}(h^{-1}W^t_h)]\\
 & - 2h\EE[(\fu(x)h + \sigmau(x)W^t_h)\cdot \frac{\partial}{\partial t}Dv(s,\xi) \Po^1_{\gu}(h^{-1}W^t_h)] \\
 & -\EE[(\fu(x)h + \sigmau(x)W^t_h)^{\intercal} D^2v(s,\xi) (\fu(x)h + \sigmau(x)W^t_h)\Po^1_{\gu}(h^{-1}W^t_h)]\enspace .
\end{align*}
\end{lemma}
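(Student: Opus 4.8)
The plan is to derive the identity from a second-order Taylor expansion of $v$ at $(t,x)$, followed by the explicit evaluation of a few Gaussian moments against the piecewise-linear weight $\Po^1_{\gu}$ — the same scheme as in the proof of Lemma~\ref{theo-er2}, only with the new weight. Throughout we condition on $\hat{X}(t)=x$, so that $\hat{X}(t+h)=x+\Delta$ with $\Delta:=\fu(x)h+\sigmau(x)W^t_h$, and we use that $h^{-1/2}W^t_h$ is a standard $d$-dimensional Gaussian vector (independent coordinates, symmetric law, all moments finite). Writing $g(\theta):=v(t+\theta h,\,x+\theta\Delta)$ for $\theta\in[0,1]$ and using Taylor's formula with the mean-value form of the remainder, $g(1)=g(0)+g'(0)+\tfrac12 g''(\theta^\star)$ for some random $\theta^\star\in[0,1]$; setting $(s,\xi):=(t+\theta^\star h,\,x+\theta^\star\Delta)$, the required convex combination of $(t,x)$ and $(t+h,\,x+\fu(x)h+\sigmau(x)W^t_h)$, this reads
\begin{multline*}
v(t+h,\hat{X}(t+h))-v(t,x)=h\tfrac{\partial v}{\partial t}(t,x)+Dv(t,x)\cdot\Delta\\
+\tfrac12\Big(h^2\tfrac{\partial^2 v}{\partial t^2}(s,\xi)+2h\,\Delta\cdot\tfrac{\partial}{\partial t}Dv(s,\xi)+\Delta\tp D^2v(s,\xi)\Delta\Big).
\end{multline*}
I would then multiply through by $\Po^1_{\gu}(h^{-1}W^t_h)$ and take the conditional expectation term by term.

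The heart of the argument is the expectation of the first-order term. Split $Dv(t,x)\cdot\Delta=Dv(t,x)\cdot\fu(x)\,h+Dv(t,x)\cdot\sigmau(x)W^t_h$ and set $b:=\sigmau(x)\tp Dv(t,x)$, so $\EE[(Dv(t,x)\cdot\sigmau(x)W^t_h)\Po^1_{\gu}(h^{-1}W^t_h)]=\EE[(b\cdot W^t_h)\Po^1_{\gu}(h^{-1}W^t_h)]$. Since $h>0$, positive and negative parts commute with the scaling, so $\Po^1_{\gu}(h^{-1}W^t_h)=2h^{-1}(\gu_+\cdot(W^t_h)_+ +\gu_-\cdot(W^t_h)_-)$. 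Expanding in coordinates and using that the components of $W^t_h$ are independent and centered, every cross term $\EE[[W^t_h]_i\,([W^t_h]_j)_\pm]$ with $i\neq j$ vanishes, while by symmetry of the Gaussian law the diagonal terms are $\EE\big[[W^t_h]_i\,([W^t_h]_i)_+\big]=\EE\big[(([W^t_h]_i)_+)^2\big]=h/2$ and $\EE\big[[W^t_h]_i\,([W^t_h]_i)_-\big]=-h/2$; summing, the factors of $h$ cancel and one is left with a fixed positive multiple of $b\cdot\gu=(\sigmau(x)\gu)\cdot Dv(t,x)$, and it is the value of that multiple that fixes the numerical prefactors in the statement. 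The other first-order piece contributes $Dv(t,x)\cdot\fu(x)\,h\,\EE[\Po^1_{\gu}(h^{-1}W^t_h)]$; note that $\EE[\Po^1_{\gu}(h^{-1}W^t_h)]$ is a nonzero multiple of $h^{-1/2}$, but this is harmless because every term other than $Dv(t,x)\cdot\sigmau(x)W^t_h$ carries an extra factor $h$, so after multiplication by $\EE[\Po^1_{\gu}(h^{-1}W^t_h)]$ the zeroth-order term $h\tfrac{\partial v}{\partial t}(t,x)$ and this drift piece combine into the explicit correction proportional to $h\big(\tfrac{\partial v}{\partial t}(t,x)+\fu(x)\cdot Dv(t,x)\big)\EE[\Po^1_{\gu}(h^{-1}W^t_h)]$, which is $O(\sqrt h)$.

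Collecting the remaining terms — the three pieces of $\tfrac12 g''(\theta^\star)$ give exactly the three remainder expectations $h^2\EE[\tfrac{\partial^2 v}{\partial t^2}(s,\xi)\Po^1_{\gu}]$, $2h\,\EE[(\fu(x)h+\sigmau(x)W^t_h)\cdot\tfrac{\partial}{\partial t}Dv(s,\xi)\,\Po^1_{\gu}]$ and $\EE[(\fu(x)h+\sigmau(x)W^t_h)\tp D^2v(s,\xi)(\fu(x)h+\sigmau(x)W^t_h)\,\Po^1_{\gu}]$ of the statement — and solving the resulting linear identity for $(\sigmau(x)\gu)\cdot Dv(t,x)$ gives Lemma~\ref{lemma-firstorder}. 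Theorem~\ref{theo-firstorder} then follows by bounding each remainder using $v\in\C^2\bo$, the boundedness of $\fu$, $\sigmau$, $\gu$, and the finiteness of all moments of $h^{-1/2}W^t_h$: from the crude bound $\|\Delta\|\le C\sqrt h\,(1+\sqrt h)(1+\|h^{-1/2}W^t_h\|)$ and $\Po^1_{\gu}(h^{-1}W^t_h)\le C h^{-1/2}\|h^{-1/2}W^t_h\|$, the drift correction is $O(\sqrt h)$ and the three remainders are $O(h^{3/2})$, $O(h)$, $O(\sqrt h)$ respectively, with the $(1+\sqrt h)^2$ factor coming from the square of that bound on $\|\Delta\|$.

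The step requiring the most care — and the only genuinely new one compared with Lemma~\ref{theo-er2} — is the handling of $\Po^1_{\gu}$: being only piecewise linear it cannot be Taylor-expanded, so the argument must keep $W^t_h$ exact and rely on the explicit Gaussian moment identities above. This is also where the upwind structure is essential: it is precisely the decomposition into positive and negative parts, together with the independence of the coordinates of $W^t_h$, that makes the leading term reproduce $(\sigmau(x)\gu)\cdot Dv(t,x)$ while simultaneously ensuring that the only potentially divergent contribution — the zeroth-order Taylor term tested against $\EE[\Po^1_{\gu}(h^{-1}W^t_h)]=O(h^{-1/2})$ — is killed by the factor $h$ that term carries.
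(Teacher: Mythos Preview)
Your approach is correct and is exactly what the paper does: it gives no separate proof of this lemma but states that the argument is the same as for Lemma~\ref{theo-er2}, namely a Taylor expansion of $v$ about $(t,x)$ followed by explicit computation of the Gaussian moments against the weight. You correctly note that the unitary-rotation device from Lemma~\ref{theo-er2} is unnecessary here because $\Po^1_{\gu}$ already decouples coordinate by coordinate, so independence of the components of $W^t_h$ suffices.

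One small point: you hedge on the leading constant (``a fixed positive multiple \dots\ fixes the numerical prefactors''), but your own moment computation pins it down exactly. With $\Po^1_{\gu}(h^{-1}W^t_h)=2h^{-1}(\gu_+\cdot(W^t_h)_++\gu_-\cdot(W^t_h)_-)$ and $\EE[([W^t_h]_i)_\pm^2]=h/2$, one gets $\EE[(b\cdot W^t_h)\,\Po^1_{\gu}(h^{-1}W^t_h)]=b\cdot\gu$ on the nose, the factor $2$ in the definition of $\Po^1_{\gu}$ cancelling the $h/2$. Carrying this through, the right-hand side of the displayed identity in the lemma actually equals $2(\sigmau(x)\gu)\cdot Dv$, not $(\sigmau(x)\gu)\cdot Dv$; this is a typographical slip in the stated lemma (consistent with the remainder coefficients $1,2,1$, which come from multiplying the Taylor remainder $\tfrac12 g''(\theta^\star)$ by $2$) and is harmless for Theorem~\ref{theo-firstorder}, whose statement carries no such factor and which you also sketch correctly.
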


We shall also need the following bound, that can be proved along the same
lines as the previous theorems. We do not give the proof since it can 
be bypassed by using alternatively the proof of Lemma 3.22 in~\cite{touzi2011}.
\begin{lemma}\label{lemma-zeroorder} 
Let $\Li$,  $\hat{X}$ and  $\DD_{t,h}^0$  be as in Section~\ref{sec-touzi}.
Denote $W^t_h=W_{t+h}-W_t$.
Assume that $\fu$ and $\sigmau$ are bounded by some constant $C$ uniformely
in ($t$ and) $x$. Then, there exists $K=K(C) > 0$ 
such that, for all $v \in \C^4\bo ([0,T]\times \R^d)$, we have,
for all $(t,x)\in \T_h\times \R^d$,
\begin{align*}
&\left| h^{-1}(\DD_{t,h}^0(v(t+h,\cdot))-v(t,x))-(\partial^1_t v+\Li(x,Dv(t,x),D^2v(t,x)))\right|=\\
&\left|h^{-1}(\EE(v(t+h,\hat{X}(t+h))\mid \hat{X}(t)=x)-v(t,x))-(\partial^1_t v+\Li(x,Dv(t,x),D^2v(t,x)))\right|\\
&\le K (1+\sqrt{h})^4
\Big[ h (|\partial^{0}_t D^{2} v| + |\partial^{1}_t D^{1} v| + |\partial^{2}_t D^{0} v| + |\partial^{0}_t D^{3} v| + |\partial^{1}_t D^{2} v| + |\partial^{0}_t D^{4} v|) \\
&\quad + h\sqrt{h} |\partial^{1}_t D^{3} v| + h^2 (|\partial^{2}_t D^{2} v| + |\partial^{2}_t D^{1} v|) + h^2\sqrt{h} |\partial^{3}_t D^{1} v| + h^3 |\partial^{4}_t D^{0} v| \Big]\enspace . 
\end{align*}
\end{lemma}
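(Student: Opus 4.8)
The plan is to proceed exactly as in the proofs of Theorem~\ref{theo-er3} (via Lemma~\ref{theo-er2}) and Theorem~\ref{theo-firstorder} (via Lemma~\ref{lemma-firstorder}), namely by a Taylor expansion with integral (or Lagrange) remainder combined with moment computations for the Gaussian increment $W^t_h$. First I would fix $(t,x)\in\T_h\times\R^d$, write $\hat X(t+h)=x+\fu(x)h+\sigmau(x)W^t_h$, and Taylor-expand $v$ as a function of $(t',x')$ around $(t,x)$ up to total order $4$, where the time variable is incremented by $h$ and the space variable by $\fu(x)h+\sigmau(x)W^t_h$. Collecting the terms of the expansion into homogeneous ``orders'' and taking the conditional expectation, one uses $\EE[W^t_h]=0$, $\EE[W^t_h (W^t_h)\tp]=hI$, the vanishing of odd moments, and $\EE\big[[W^t_h]_i[W^t_h]_j[W^t_h]_p[W^t_h]_q\big]=h^2(\delta_{ij}\delta_{pq}+\delta_{ip}\delta_{jq}+\delta_{iq}\delta_{jp})$. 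The term of order $1$ in $h$ reproduces exactly $h\big(\partial^1_t v+\Li(x,Dv,D^2v)\big)=h\big(\partial^1_t v+\tfrac12\tr(a(x)D^2v)+\fu(x)\cdot Dv\big)$, since $a(x)=\sigmau(x)\sigmau(x)\tp$; after dividing by $h$ and subtracting, what remains is the $O(h)$ (and smaller) collection of terms that must be bounded.

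The bookkeeping is the only real work, and it is the step I expect to be the main obstacle: one must verify that every leftover term, after multiplication by the appropriate power of $h$ coming from the moments of $W^t_h$ and division by $h$, carries a factor listed on the right-hand side, with the weight of $v$ being one of $|\partial^{0}_t D^{2} v|,\,|\partial^{1}_t D^{1} v|,\,|\partial^{2}_t D^{0} v|,\,|\partial^{0}_t D^{3} v|,\,|\partial^{1}_t D^{2} v|,\,|\partial^{0}_t D^{4} v|,\,|\partial^{1}_t D^{3} v|,\,|\partial^{2}_t D^{2} v|,\,|\partial^{2}_t D^{1} v|,\,|\partial^{3}_t D^{1} v|,\,|\partial^{4}_t D^{0} v|$. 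Concretely, a term with $p$ time-derivatives, $q$ space-derivatives, a prefactor $h^a$, and $b$ factors of $(\fu(x)h+\sigmau(x)W^t_h)$ (with $q=b$ for the pure-remainder block and $q\le b$ for the ``matched'' lower-order blocks) contributes, after taking expectations, something of size at most $C^{\,\ast}\,h^{a}\cdot h^{\lceil b/2\rceil}\,|\partial^p_t D^q v|$ when $b$ is even, and $O(\sqrt h)$ worse when $b$ is odd; the boundedness hypothesis on $\fu,\sigmau$ by $C$ controls the coefficients, and the factor $(1+\sqrt h)^4$ absorbs the cross terms mixing $\fu(x)h$ with $\sigmau(x)W^t_h$ inside the $b$-fold products. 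Matching these against the stated list is straightforward but tedious; the half-integer powers of $h$ are precisely what force the odd-$b$ contributions into the $h\sqrt h$ and $h^2\sqrt h$ slots.

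Alternatively, as the statement itself notes, one may bypass this computation entirely: the quantity $h^{-1}(\DD^0_{t,h}(v(t+h,\cdot))(x)-v(t,x))$ is, by~\eqref{defDcal}, $h^{-1}\big(\EE[v(t+h,\hat X(t+h))\mid \hat X(t)=x]-v(t,x)\big)$, which is exactly the consistency error of the Euler--Maruyama scheme for the generator $\partial^1_t+\Li$; this is controlled by Lemma~3.22 of~\cite{touzi2011}, whose proof applies verbatim under the present boundedness assumptions on $\fu$ and $\sigmau$ and yields a bound of the stated form (the specific pattern of derivative orders and powers of $h$ being read off from the fourth-order Taylor remainder used there). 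I would present the Taylor-expansion argument as the primary proof, noting the reference as an alternative, and relegate the term-by-term verification to the reader since it is identical in structure to the one carried out for Lemma~\ref{theo-er2} and Lemma~\ref{lemma-firstorder}.
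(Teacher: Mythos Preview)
Your proposal is correct and matches the paper's approach exactly: the paper does not actually give a proof, stating only that it ``can be proved along the same lines as the previous theorems'' (i.e., Taylor expansion plus Gaussian moment computations as in Lemma~\ref{theo-er2} and Lemma~\ref{lemma-firstorder}) and that it ``can be bypassed by using alternatively the proof of Lemma~3.22 in~\cite{touzi2011}.'' You have identified both routes and outlined the first one in adequate detail.
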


\section{Monotone probabilistic schemes for HJB equations}
\label{sec-monotone}

We shall apply the above approximations
of the first and second order derivatives in~\eqref{HJB}
in the same way as in~\cite{fodjo2}.
Let us decompose the hamiltonian $\Ha^{m,u}$
of~\eqref{defHmu} as $\Ha^{m,u}=\Li^{m}+\G^{m,u}$ with 
\begin{align*}
 \Li^{m}(x, p,\Gamma):=&
\frac{1}{2} \tr\left(a^{m}(x)  \Gamma \right) +\underline{f}^{m}(x)\cdot p \enspace,
\end{align*}
and $a^{m}(x)=\underline{\sigma}^{m}(x) \underline{\sigma}^{m}(x)\tp$,
and denote by $\hat{X}^m$
the Euler discretization of the diffusion with generator $\Li^m$.
We may choose the same linear operator $\Li^{m}$ for different
values of $m$, which is the case in Algorithm~\ref{algo1} below.
Assume that $a^{m}(x)$ is positive definite and that
$a^{m}(x)\leq \sigma^m(x,u) \sigma^m(x,u)\tp $ for all 
$x\in\R^d,\; u\in \U$,
and denote by $\Sigma^m(x,u)$ any $d\times \ell$ matrix such that
\begin{equation}\label{assump1}
\sigma^m(x,u) \sigma^m(x,u)\tp -a^{m}(x)= \underline{\sigma}^{m}(x) \Sigma^m(x,u)\Sigma^m(x,u)\tp\underline{\sigma}^{m}(x)\tp
\enspace .
\end{equation}
One may use for instance a Cholesky factorization of the matrix
$\underline{\sigma}^{m}(x)^{-1}(\sigma^m(x,u) \sigma^m(x,u)\tp -a^{m}(x)) 
(\underline{\sigma}^{m}(x))\tp)^{-1}$ in which zero columns are eliminated
to obtain a rectangular matrix $\Sigma^m(x,u)$
of size $d\times \ell$ when the rank of the initial matrix is equal to $\ell<d$.

Denote also by $\gu^m(x,u)$ the $d$-dimensional vector such that
\begin{equation}\label{assumpt-gu}
 f^m(x,u)-\underline{f}^{m}(x)=\underline{\sigma}^{m}(x) \gu^m(x,u)\enspace .
\end{equation}

Define
\begin{subequations}
\begin{align}
\G^{m}_1(x, p,\gu)&:= (\underline{\sigma}^{m}(x) \gu ) \cdot p \\
\G^{m}_2(x, \Gamma,\Sigma)&:= \frac{1}{2} \tr\left(\underline{\sigma}^{m}(x) \Sigma \Sigma\tp \underline{\sigma}^{m}(x)\tp\Gamma\right)
\end{align}
\end{subequations}
so that 
\[ \G^{m,u}(x, r,p,\Gamma)
= \ell^m(x,u)-\delta^m(x,u) r + \G^{m}_1(x, p, \gu^m(x,u) )+\G^{m}_2(x, \Gamma,
\Sigma^m(x,u) )\enspace .\]

Applying Theorems~\ref{theo-er3} and~\ref{theo-firstorder} and 
Lemma~\ref{lemma-zeroorder}, we deduce the following result
which shows the consistency of the scheme~\eqref{discHJB},
together with estimates that are necessary to apply the 
results of Barles and Jakobsen in~\cite{jakobsen07}.

\begin{theorem}\label{cor-const}
Let $\underline{\sigma}^m$, $\underline{f}^{m}$,
$\hat{X}^m$ and $\Li^m$ be as above.
Let us consider the following ``discretization'' operators from the set
of functions from $\bT_h\times \R^d$ to $\R$ to the set
of functions from  $\T_h\times \R^d$ to $\R$ :
\begin{align*}
\DD_{t,h,m}^0(\phi)(t,x)&:=
\EE\left[\phi(t+h, \hat{X}^m(t+h)) \mid \hat{X}(t)=x \right] \\
\DD_{t,h,m,\gu}^1(r,\phi)(t,x)&:=
\EE\left[(\phi(t+h, \hat{X}^m(t+h))-r)\Po^1_{\gu}(h^{-1}(W_{t+h}-W_t)) 
\mid \hat{X}(t)=x \right] \\
\DD_{t,h,m,\Sigma,k}^2(\phi)(t,x)&:=
h^{-1}\EE\left[\phi(t+h,\hat{X}^m(t+h)) \Po^2_{\Sigma,k}(h^{-1/2}(W_{t+h}-W_t))
 \mid \hat{X}^m(t)=x\right]\enspace ,
\end{align*}
with  $\Po^1_{\gu}$ and $\Po^2_{\Sigma,k}$ as in~\eqref{poly1}
and~\eqref{defpoly2k} respectively.

Then, consider the following discretization of~\eqref{HJB}:
\begin{equation}\label{discHJB}
 \K(h,t,x,v(t,x),v)=0,\quad (t,x)\in \T_h\times \R^d\enspace ,
\end{equation}
where $v$ is a map  $\bT_h\times \R^d$ to $\R$, and $\K$ is 
defined by: 
\begin{align*}
&\K(h,t,x,r,\phi)=-
 \max_{m\in\ M,\; u\in \U} \Big\{ h^{-1} (\DD_{t,h,m}^{0}(\phi)(t,x)-r)\\
&\quad +  \ell^m(x,u)- \delta^m(x,u) r
+\DD_{t,h,m,\gu^m(x,u)}^1(r,\phi)(t,x)
+  \DD_{t,h,m,\Sigma^m(x,u),k}^2(\phi)(t,x)\Big\}
\enspace .
\end{align*}
Assume that $\underline{\sigma}^m$, $\underline{f}^{m}$, $\gu^m$ and 
$\Sigma^m$ are bounded maps (in $x$ and $u$). Then, there exists $K$ 
depending on these bounds, such that,
for any $0<\epsilon\leq 1$, $\tilde{K}$ and $v\in\C^{\infty}\bo $
satisfying 
\[ |\partial^{p}_t D^{q} v| \leq \tilde{K} \epsilon^{1-2p-q}\quad
\text{for all}\;  p,q\in \N\enspace ,\]
we have,
\[ |\K(h,t,x,v(t,x),v)
+\frac{\partial v}{\partial t}(t,x)
+\Ha(x, v(t,x), Dv(t,x), D^2v(t,x))|\leq E(\tilde{K},h,\epsilon) \enspace ,\]
 for all $t\in\T_h$ and $x\in \R^d$, with
\[  E(\tilde{K},h,\epsilon)=K\tilde{K} \left( h\epsilon^{-3} (1+\sqrt{h})^4
(1+\sqrt{h}\epsilon^{-1})^4+ \sqrt{h}\epsilon^{-1} (1+\sqrt{h})^2
(1+\sqrt{h}\epsilon^{-1})^2\right)\enspace .\]
\end{theorem}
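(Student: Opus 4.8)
The plan is to decompose the error into the contributions of the three discretization operators $\DD^0$, $\DD^1$, $\DD^2$, and to apply the consistency estimates already proved for each of them — namely Theorem~\ref{theo-er3}, Theorem~\ref{theo-firstorder}, and Lemma~\ref{lemma-zeroorder} — and then sum the resulting bounds, tracking the powers of $h$ and $\epsilon$.

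First I would fix $m$ and $u$ and write out the quantity inside the $\max$ in the definition of $\K$. Using the decomposition $\Ha^{m,u}=\Li^m+\G^{m,u}$ together with $\G^{m,u}(x,r,p,\Gamma)=\ell^m(x,u)-\delta^m(x,u)r+\G^m_1(x,p,\gu^m(x,u))+\G^m_2(x,\Gamma,\Sigma^m(x,u))$, the target expression $\partial_t v+\Ha^{m,u}(x,v,Dv,D^2v)$ splits into four pieces:
\begin{itemize}\end{itemize}
Actually, to avoid a bullet list: the target splits as (i) $\partial_t v+\Li^m(x,Dv,D^2v)$, matched against $h^{-1}(\DD^0_{t,h,m}(v)(t,x)-v(t,x))$, with error controlled by Lemma~\ref{lemma-zeroorder}; (ii) $\ell^m(x,u)-\delta^m(x,u)v(t,x)$, which appears exactly in $\K$ with no error at all; (iii) $\G^m_1(x,Dv,\gu^m(x,u))=(\underline{\sigma}^m(x)\gu^m(x,u))\cdot Dv$, matched against $\DD^1_{t,h,m,\gu^m(x,u)}(v(t,x),v)(t,x)$, with error controlled by Theorem~\ref{theo-firstorder} applied with $\fu=\underline{f}^m$, $\sigmau=\underline{\sigma}^m$, $\gu=\gu^m(x,u)$; and (iv) $\G^m_2(x,D^2v,\Sigma^m(x,u))=\tfrac12\tr(\underline{\sigma}^m(x)\Sigma^m\Sigma^{m\mathsf{T}}\underline{\sigma}^{m\mathsf{T}}(x)D^2v)$, matched against $\DD^2_{t,h,m,\Sigma^m(x,u),k}(v)(t,x)$, with error controlled by Theorem~\ref{theo-er3} applied with the same $\fu,\sigmau$ and $\Sigma=\Sigma^m(x,u)$. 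The boundedness hypotheses on $\underline{\sigma}^m,\underline{f}^m,\gu^m,\Sigma^m$ supply the constants $C$ and $M$ needed to invoke those three results with a single constant $K$ depending only on those bounds.

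Next I would substitute the hypothesis $|\partial^p_t D^q v|\le\tilde K\epsilon^{1-2p-q}$ into each of the three bounds. In the bound of Lemma~\ref{lemma-zeroorder}, every derivative term $|\partial^p_t D^q v|$ with $2p+q\in\{3,4,5,6\}$ contributes $h$ times $\epsilon^{1-2p-q}$, and one checks term by term that each summand is at most a constant times $h\epsilon^{-3}(1+\sqrt h)^4(1+\sqrt h\,\epsilon^{-1})^{?}$; more cleanly, each term $h^a\sqrt h^{\,b}|\partial^p_t D^q v|$ appearing there has $2a+b=2p+q-1$ (it "costs" one factor $h^{1/2}$ per order of differentiation beyond the leading $h$), so the whole bracket is $\le \tilde K h\epsilon^{-3}(1+\sqrt h\,\epsilon^{-1})^4$ up to the prefactor $(1+\sqrt h)^4$; hence Lemma~\ref{lemma-zeroorder} gives a contribution $\le K\tilde K h\epsilon^{-3}(1+\sqrt h)^4(1+\sqrt h\,\epsilon^{-1})^4$ after dividing by $h$ — wait, Lemma~\ref{lemma-zeroorder} already bounds the $h^{-1}$-normalized quantity, so its bracket is exactly the $h\epsilon^{-3}(\cdots)$ term. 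The same bookkeeping on Theorem~\ref{theo-er3} gives a contribution $\le K\tilde K h\epsilon^{-3}(1+\sqrt h)^4(1+\sqrt h\,\epsilon^{-1})^4$, and on Theorem~\ref{theo-firstorder} — whose bracket starts at order $\sqrt h$ and whose terms $h^a\sqrt h^{\,b}|\partial^p_t D^q v|$ satisfy $2a+b=2p+q+1$ — a contribution $\le K\tilde K\sqrt h\,\epsilon^{-1}(1+\sqrt h)^2(1+\sqrt h\,\epsilon^{-1})^2$. Summing the three and renaming the constant yields exactly $E(\tilde K,h,\epsilon)$. Finally, since all three bounds hold uniformly in $m\in\M$ and $u\in\U$ (finitely many $m$, and the constants depend only on the uniform bounds, not on $u$), the estimate survives taking $\max_{m,u}$ on both sides: the difference of the two maxima is bounded by the sup over $(m,u)$ of the differences, which is $\le E(\tilde K,h,\epsilon)$.

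The main obstacle is purely the power-counting of step three: one must verify that every single derivative term produced by the three cited results, once the $\epsilon$-scaling is inserted, fits under the stated envelope $E(\tilde K,h,\epsilon)$, with the cross terms $(1+\sqrt h)(1+\sqrt h\,\epsilon^{-1})$ absorbing the mixed $h^a\epsilon^{-b}$ monomials with $a<b/2+\text{(leading)}$. This is routine but tedious; the key structural observation that makes it go through is that in each of the three error bounds the exponent pattern is "one extra $\sqrt h$ per extra order of differentiation", which is precisely matched by the "one extra $\epsilon^{-1}$ per extra order" in the hypothesis on $v$, so that each term carries the same combined weight and collapses into the leading term times a bounded-near-$h=0$ correction factor. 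Beyond that, passing the bound through $\max_{m,u}$ uses only the elementary inequality $|\max_i a_i-\max_i b_i|\le\max_i|a_i-b_i|$, and the exact cancellation of the $\ell^m-\delta^m r$ term needs no estimate at all.
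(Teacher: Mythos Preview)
Your proposal is correct and follows exactly the approach the paper takes: the paper gives no proof of this theorem beyond the sentence ``Applying Theorems~\ref{theo-er3} and~\ref{theo-firstorder} and Lemma~\ref{lemma-zeroorder}, we deduce the following result,'' and you have filled in precisely those details---the decomposition into the $\DD^0,\DD^1,\DD^2$ contributions, the invocation of the three cited bounds, the substitution of the $\epsilon$-scaling, and the passage through the $\max_{m,u}$. Your power-counting is right (the cross terms $h^a\epsilon^{-b}$ with small $b$ are absorbed because $\epsilon\le 1$, and the ones with large $b$ fit under the $(1+\sqrt h\,\epsilon^{-1})^k$ envelope), so nothing is missing.
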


\begin{lemma}\label{remarkT}
If $\delta^m\geq 0$, or if $\delta^m$ is lower bounded and $h$
is small enough, 
the discretized equation~\eqref{discHJB} can be rewritten as
the solution of the iterative equation~\eqref{scheme} with
$T_{t,h}$ defined by:
\begin{align}\label{defnewTh}
T_{t,h}(\phi)(x)=&   \max_{m\in\ M,\; u\in \U} \frac{T_{t,h,m,u}^N(\phi)(x)}
{T_{t,h,m,u}^D(x)}\enspace,
\end{align}
with
\begin{align*}
T_{t,h,m,u}^N(\phi)(x)=&
\DD_{t,h,m}^{0}(\phi)(t,x) +h \big\{
\ell^m(x,u)\\
&\quad  + \DD_{t,h,m,\gu^m(x,u)}^1(0,\phi)(t,x)
+  \DD_{t,h,m,\Sigma^m(x,u),k}^2(\phi)(t,x)\big\}\big\}\\
T_{t,h,m,u}^D(x)=&
1+h \delta^m(x,u)+h \EE\left[\Po^1_{\gu^m(x,u)}(h^{-1}(W_{t+h}-W_t)) \right]
\enspace .
\end{align*}
\end{lemma}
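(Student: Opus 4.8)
The plan is to start from the definition of $\K$ in~\eqref{discHJB} and simply rearrange the equation $\K(h,t,x,v(t,x),v)=0$ into fixed-point form. First I would note that, writing $r=v(t,x)$ and $\phi=v$, the equation $\K(h,t,x,r,\phi)=0$ reads
\[
r = \max_{m\in\M,\,u\in\U}\Big\{ \DD_{t,h,m}^{0}(\phi)(t,x) -h\delta^m(x,u)r + h\ell^m(x,u) + h\DD_{t,h,m,\gu^m(x,u)}^1(r,\phi)(t,x) + h\DD_{t,h,m,\Sigma^m(x,u),k}^2(\phi)(t,x)\Big\},
\]
after moving the $h^{-1}(\DD^0-r)$ term to isolate $r$ on the left (the $h^{-1}$ cancels against the implicit $h$ multiplying the whole bracket once one multiplies through by $h$ and adds $r$). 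The key observation is that the only place $r$ appears \emph{inside} the maximum, other than through $\delta^m(x,u)r$, is in $\DD_{t,h,m,\gu^m(x,u)}^1(r,\phi)(t,x)$, and by the definition of that operator in Theorem~\ref{cor-const}, $\DD^1_{t,h,m,\gu}(r,\phi)(t,x) = \DD^1_{t,h,m,\gu}(0,\phi)(t,x) - r\,\EE[\Po^1_{\gu}(h^{-1}W^t_h)]$, because $\Po^1_{\gu}$ enters linearly and $(\phi-r) = (\phi-0) - r$. So the $r$-dependence inside the bracket is entirely affine: the coefficient of $r$ in the $(m,u)$ term is exactly $-h\delta^m(x,u) - h\,\EE[\Po^1_{\gu^m(x,u)}(h^{-1}W^t_h)]$, which is the quantity $1 - T^D_{t,h,m,u}(x)$.

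Next I would collect terms. Each $(m,u)$ summand equals $T^N_{t,h,m,u}(\phi)(x) + (1-T^D_{t,h,m,u}(x))\,r$, with $T^N$ and $T^D$ exactly as defined in the statement (the $h\DD^1_{t,h,m,\gu}(0,\phi)$, $h\ell^m$, $h\DD^2$ and $\DD^0$ terms assembling into $T^N$). Hence the equation becomes
\[
r = \max_{m\in\M,\,u\in\U}\big\{ T^N_{t,h,m,u}(\phi)(x) + (1-T^D_{t,h,m,u}(x))\,r\big\}.
\]
Now I would use the hypothesis: if $\delta^m\ge 0$ then, since $\Po^1_{\gu}$ is nonnegative (noted after~\eqref{poly1}), we have $\EE[\Po^1_{\gu^m(x,u)}(h^{-1}W^t_h)]\ge 0$, so $T^D_{t,h,m,u}(x)\ge 1>0$ for every $m,u$; if $\delta^m$ is merely bounded below, say by $-C_0$, then $T^D_{t,h,m,u}(x)\ge 1 - hC_0 > 0$ for $h$ small enough, uniformly in $m,u,x$. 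In either case $T^D_{t,h,m,u}(x)>0$ for all $m,u,x$. The scalar identity to invoke is then: for real numbers with $b_i>0$, the equation $r = \max_i\{a_i + (1-b_i)r\}$ is equivalent to $r = \max_i\{a_i/b_i\}$ — because $r = a_i + (1-b_i)r \iff b_i r = a_i \iff r = a_i/b_i$, and taking a max over $i$ of affine functions with slopes $1-b_i<1$ is a contraction in $r$ so the fixed point of the max equals the max of the individual fixed points; more directly, $r\ge a_i+(1-b_i)r$ for all $i$ with equality for some $i$ iff $b_i r\ge a_i$ for all $i$ with equality for some $i$ iff $r\ge a_i/b_i$ for all $i$ with equality for some $i$ iff $r=\max_i a_i/b_i$. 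Applying this pointwise in $(t,x)$ with $a_i = T^N_{t,h,m,u}(\phi)(x)$, $b_i = T^D_{t,h,m,u}(x)$ and $i=(m,u)$ gives $v(t,x) = \max_{m,u} T^N_{t,h,m,u}(v)(x)/T^D_{t,h,m,u}(x) = T_{t,h}(v(t+h,\cdot))(x)$, which is precisely~\eqref{scheme} with $T_{t,h}$ as in~\eqref{defnewTh} (noting that $T^N_{t,h,m,u}$ depends on $\phi$ only through its values at time $t+h$, via $\DD^0,\DD^1,\DD^2$).

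I do not anticipate a serious obstacle here: the argument is essentially bookkeeping plus the elementary scalar fixed-point identity. The one point requiring care is the direction where the $\max$ over $(m,u)$ interacts with division by $T^D$ — one must make sure the equivalence "$r=\max_i\{a_i+(1-b_i)r\}$ iff $r=\max_i a_i/b_i$" is applied correctly and does not secretly assume uniqueness of the maximizer; the cleanest way is the chain-of-iff phrasing above, which never picks a maximizer. A second minor point is the $h$-smallness threshold in the $\delta^m$-bounded-below case: one should record that it depends only on the lower bound of $\delta^m$ (uniform in $m$, $x$, $u$), so that the rewriting is valid on the whole grid $\T_h\times\R^d$ simultaneously. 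There is also the harmless bracket-count note that the displayed $T^N_{t,h,m,u}$ in the statement has a trailing pair of extra "$\}$"s that should be read as the correctly-closed expression $\DD^0 + h\{\ell^m + \DD^1 + \DD^2\}$.
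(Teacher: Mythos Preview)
Your proof is correct and is exactly the computation the paper has in mind: the paper does not supply a separate proof of Lemma~\ref{remarkT}, but in the proof of Theorem~\ref{theo-monotone} it records the identity~\eqref{writeK}, namely $\K(h,t,x,r,\phi)=-\max_{m,u} h^{-1}\big(T^N_{t,h,m,u}(\phi(t+h,\cdot))(x)-T^D_{t,h,m,u}(x)\,r\big)$, from which $\K=0$ together with $T^D>0$ yields $r=\max_{m,u}T^N/T^D$ by precisely your scalar fixed-point equivalence. Your handling of the $r$-dependence in $\DD^1$, of the positivity of $T^D$ under either hypothesis on $\delta^m$, and of the max/division interaction is all fine.
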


Note that $T_{t,h,m,u}^D(x)=1+O(\sqrt{h})$ when $\delta^m$ and
$\gu^m$ are upper bounded.

\begin{remark}\label{remcompare}
When $\delta^m(x,u)$ and $\gu^m(x,u)$ are zero, the above operator $T_{t,h}$ 
coincides with the operator proposed in~\cite{fodjo2}, 
which corresponds to
\begin{subequations}
\begin{align}
T_{t,h}(\phi)(x)=&   \max_{m\in\ M,\; u\in \U} {T_{t,h,m,u}(\phi)(x)}\\
T_{t,h,m,u}(\phi)(x)=&
\DD_{t,h,m}^{0}(\phi)(t,x) (1-\delta^m(x,u)h)
+h \big\{ \ell^m(x,u) \\
&\quad  + \tilde{\DD}_{t,h,m,\gu^m(x,u)}^1(\phi)(t,x)
+  \DD_{t,h,m,\Sigma^m(x,u),k}^2(\phi)(t,x)\big\}\big\}
\enspace , 
\end{align}
\label{Thfodjo2}
\end{subequations}
with
\[ \tilde{\DD}_{t,h,m,\gu}^1(\phi)(t,x):=
\EE\left[\phi(t+h, \hat{X}^m(t+h))\gu \cdot (h^{-1}(W_{t+h}-W_t))
\mid \hat{X}(t)=x \right] \enspace .\]
When $k=0$, and $\Li^m=\Li$ does not
depend on $m$, the former operator coincides 
with the operator~\eqref{def-th-m} 
proposed in~\cite{touzi2011}, see~\cite{fodjo2}.
Note that when $\delta^m(x,u)\neq 0$,
one need to replace $-\delta^m(x,u) r$
by  $- \delta^m(x,u) \DD_{t,h,m}^{0}(\phi)(t,x)$ in the expression of $\K$
in order to recover the operators of~\cite{touzi2011} and~\cite{fodjo2}.

When the sign of $\delta^m$ is not fixed or $\delta^m$ is not lower bounded,
one can replace   $-\delta^m(x,u) r$ by  
\[-\delta^m(x,u)_+ r+\delta^m(x,u)_- \DD_{t,h,m}^{0}(\phi)(t,x)\]
in the expression of $\K$ so that in all cases,
the discretized equation~\eqref{discHJB} can be rewritten as
the solution of the iterative equation~\eqref{scheme} with
$T_{t,h}$ defined by~\eqref{defnewTh} and 
\begin{align*}
T_{t,h,m,u}^N(\phi)(x)=&
\DD_{t,h,m}^{0}(\phi)(t,x) +h \big\{
\ell^m(x,u)+\delta^m(x,u)_- \DD_{t,h,m}^{0}(\phi)(t,x)\\
&\quad  + \DD_{t,h,m,\gu^m(x,u)}^1(0,\phi)(t,x)
+  \DD_{t,h,m,\Sigma^m(x,u),k}^2(\phi)(t,x)\big\}\big\}\\
T_{t,h,m,u}^D(x)=&
1+h \delta^m(x,u)_++h \EE\left[\Po^1_{\gu^m(x,u)}(h^{-1}(W_{t+h}-W_t)) \right]
\enspace .
\end{align*}
\end{remark}

In~\cite[Theorem 3.3]{fodjo2}, we proved that 
the operator $T_{t,h}$ is monotone for $h$ small enough
over the set of bounded continuous functions $\R^d\to\R$,
under the assumption that $\bar{a}< 4k+2$ with $\bar{a}$  an
upper bound of $\tr(\Sigma^m(x,u)\Sigma^m(x,u)\tp)$ (for all
$x$ and $u$) and that $\delta^m$ is upper bounded, 
and that there exists a bounded map $\tilde{g}^m$ such that
$\gu^m(x,u)= \Sigma^m(x,u)\tilde{g}^m(x,u)$.
This was already a generalization of~\cite[Lemma 3.12]{touzi2011},
since the latter corresponds to the case where $k=0$.
Here, we shall only need that $\gu^m$ is bounded.
This will allows to apply the result to degenerate matrices 
$\Sigma^m(x,u)\Sigma^m(x,u)\tp$.
Also $\delta^m$ need not to be upper bounded at this point
because the expression of $\K$ uses $-\delta^m(x,u) r$ instead of
$- \delta^m(x,u) \DD_{t,h,m}^{0}(\phi)(t,x)$.
\begin{theorem}\label{theo-monotone}
Let $\K$ be as in Theorem~\ref{cor-const}.
Assume that the map $\tr(\Sigma^m(x,u)\Sigma^m(x,u)\tp)$ is upper bounded 
in $x$ and $u$ and let $\bar{a}$ be  an upper bound.
Assume also that %
$\delta^m$ is lower bounded.
Then, for $k$ such that $\bar{a}\leq  4k+2$,
 $\K$ is monotone in the sense of~\cite{jakobsen07}.
Also, there exists $h_0$ such that the 
operator $T_{t,h}$ of Lemma~\ref{remarkT} is monotone for $h\leq h_0$
over the set of bounded continuous functions $\R^d\to\R$. 
\end{theorem}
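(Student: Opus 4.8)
The plan is to verify the two hypotheses required by~\cite{jakobsen07} — that the scheme $\K$ is \emph{monotone} and that the associated iteration operator $T_{t,h}$ is monotone for small $h$ — by treating each discretization block $\DD^0$, $\DD^1$, $\DD^2$ separately and tracking the sign of its contribution when the argument $\phi$ is increased. Monotonicity of a scheme in the sense of~\cite{jakobsen07} means that $\K(h,t,x,r,\phi)$ is nonincreasing in $r$ and nondecreasing in $\phi$ (pointwise on $\bT_h\times\R^d$); since $\K$ is a max over $(m,u)$ of expressions that are affine in $r$ and in $\phi$, it suffices to check the sign of each term in a single summand. The coefficient of $r$ in the summand is $-h^{-1}-\delta^m(x,u)$, whose negative of (after the factor $-1$ in front of the max) the relevant sign is automatic once $\delta^m$ is lower bounded and $h$ is small enough that $h^{-1}+\delta^m(x,u)>0$; here one should also invoke the $\delta^m_+/\delta^m_-$ splitting of Remark~\ref{remcompare} if the sign of $\delta^m$ is not fixed, so that the $-\delta^m_+\,r$ part keeps the right monotonicity in $r$ and the $+\delta^m_-\,\DD^0_{t,h,m}(\phi)$ part is monotone in $\phi$.

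Next I would handle the $\phi$-dependence of the three discretization operators. For $\DD^0_{t,h,m}$ this is immediate: it is a conditional expectation, hence monotone in $\phi$. For $\DD^1_{t,h,m,\gu}$ the key point is that $\Po^1_{\gu}$ defined in~\eqref{poly1} is \emph{nonnegative} (this is remarked right after its definition), so $\EE[(\phi(t+h,\hat X^m(t+h))-r)\Po^1_{\gu}(\cdot)\mid\cdot]$ is nondecreasing in $\phi$ and nonincreasing in $r$ — exactly the right signs, and this works for \emph{any} bounded $\gu^m$, with no ellipticity needed, which is the whole point of the new first-order scheme. The delicate block is $\DD^2_{t,h,m,\Sigma,k}$: here $\Po^2_{\Sigma,k}$ of~\eqref{defpoly2k} is \emph{not} sign-definite, and monotonicity of $h^{-1}\EE[\phi(t+h,\hat X^m(t+h))\Po^2_{\Sigma,k}(h^{-1/2}W^t_h)\mid\cdot]$ in $\phi$ is where the condition $\bar a\le 4k+2$ enters. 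I would reproduce the argument of~\cite[Theorem 3.3]{fodjo2}: diagonalise column by column using the unitary matrix $U$ from the sketch of Lemma~\ref{theo-er2}, reduce to a one-dimensional statement about $\EE[\,\psi(a+bN)(c_k N^{4k+2}-d_k)\,]$ being monotone in $\psi\ge 0$ increments, and observe that after the change of variables the ``bad'' negative mass $-d_k\|\Sigma_{.j}\|_2^2$ summed over $j$ is exactly $-\tr(\Sigma\Sigma\tp)/(4k+2)$, while the $\DD^0$ block contributes a mass $1$ (up to $O(\sqrt h)$ from the $\gu$ term) that dominates it precisely when $\tr(\Sigma\Sigma\tp)\le\bar a\le 4k+2$; the leftover $O(\sqrt h)$ terms coming from $\DD^1$, $\ell^m$, $\delta^m_-$ are absorbed by taking $h\le h_0$ small. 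Combining, each summand $T^N_{t,h,m,u}(\phi)$ is monotone in $\phi$, each denominator $T^D_{t,h,m,u}(x)=1+O(\sqrt h)$ is positive for $h\le h_0$, so the ratio and then the max over $(m,u)$ are monotone.

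The main obstacle is the second-order block: one must show that the genuinely signed polynomial $\Po^2_{\Sigma,k}$, when coupled with the positive mass supplied by $\DD^0_{t,h,m}$, yields a net monotone operator, and that the threshold is exactly $\bar a\le 4k+2$ with all perturbative $O(\sqrt h)$ contributions controlled uniformly in $(t,x,m,u)$. This is essentially~\cite[Theorem 3.3]{fodjo2}, and the only genuinely new checking is that allowing $\gu^m$ to be merely bounded (rather than of the form $\Sigma^m\tilde g^m$) and allowing $\delta^m$ to be merely lower bounded does not break the estimate: the first is fine because $\Po^1_{\gu}\ge 0$ contributes with the favourable sign and its expectation is $O(\sqrt h)$, and the second is fine because $-\delta^m\,r$ (rather than $-\delta^m\,\DD^0(\phi)$) keeps the $r$-monotonicity unconditional, with the $\delta^m_-$ splitting invoked only when the sign of $\delta^m$ is not fixed.
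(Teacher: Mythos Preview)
Your plan correctly handles the \emph{basic} comparison ``$v\le v'\Rightarrow \K(\cdot,v)\ge \K(\cdot,v')$'': the paper does exactly what you sketch, combining the three weights into a single kernel
\[
\Po^{h,m,u,x}(w)=1+h\,\Po^1_{\gu^m(x,u)}(h^{-1/2}w)+\Po^2_{\Sigma^m(x,u),k}(w)\ge 1-\frac{\bar a}{4k+2}\ge 0,
\]
using $\Po^1_{\gu}\ge 0$ and the pointwise lower bound $\Po^2_{\Sigma,k}\ge -\tr(\Sigma\Sigma\tp)/(4k+2)$. This also yields the monotonicity of $T^N_{t,h,m,u}$ and hence of $T_{t,h}$ once $T^D_{t,h,m,u}(x)>0$, as you say.

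However, you have misidentified what ``monotone in the sense of~\cite{jakobsen07}'' means. Condition {\bf (S1)} there is \emph{not} simply ``$\K$ nonincreasing in $r$ and monotone in $\phi$''. Adapted to the present backward setting it reads: there exist $\lambda,\mu\ge 0$ and $h_0>0$ such that for $h\le h_0$, for all bounded continuous $v\le v'$ and all $\psi(t)=e^{\mu(T-t)}(a+b(T-t))+c$ with $a,b,c\ge 0$,
\[
\K(h,t,x,r+\psi(t),v+\psi)\ \ge\ \K(h,t,x,r,v')+b/2-\lambda c\quad\text{on }\T_h\times\R^d.
\]
Roughly half of the paper's proof is devoted to the $\psi\neq 0$ part of this inequality, which your plan does not address at all. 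The paper reduces to $v=v'$ (using the basic monotonicity you proved), then estimates
\[
\K(h,t,x,r+\psi(t),v+\psi)-\K(h,t,x,r,v)\ \ge\ -h^{-1}(\psi(t+h)-\psi(t))-\lambda\psi(t),
\]
taking $\lambda$ an upper bound of $-\delta^m$ and using crucially that $\psi(t+h)-\psi(t)\le 0$ together with $\Po^1_{\gu}\ge 0$ to discard the first-order contribution with the right sign. A short computation then gives the lower bound $b-\lambda c$ provided $1-e^{-\mu h}\ge \lambda h$, which holds for $\mu>\lambda$ and $h\le h_0$. Without this step your proof of ``monotone in the sense of~\cite{jakobsen07}'' is incomplete.

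Two smaller remarks. First, the $\delta^m_+/\delta^m_-$ splitting of Remark~\ref{remcompare} is not used in the paper's proof of this theorem: the argument works directly with $-\delta^m(x,u)\,r$ and only needs $\delta^m$ lower bounded; the splitting is an alternative construction, not part of this proof. Second, watch the sign: since $\K=-\max\{\cdots\}$, the operator $\K$ is \emph{nonincreasing} in $\phi$ (and nondecreasing in $r$), not the other way around.
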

\begin{proof}
Adapting the definition of monotonicity of~\cite[{\bf (S1)}]{jakobsen07} to our
setting (backward equations and a time discretization only), we need to prove
that there exists $\lambda, \mu \ge 0$, $h_0 > 0$ such that if $h\leq h_0$, 
$v,v'$ are bounded continuous functions from $\bT_h\times \R^d$ to $\R$
 such that 
$v \leq v'$ and $\psi(t)=e^{\mu (T-t)} (a+b(T-t))+c$ with $a,b,c \ge 0$, then :
\begin{equation}\label{monotone-barles}
\K(h,t,x,r+\psi(t),v+\psi) \geq \K(h,t,x,r,v') + b/2-\lambda c \text{ in }\T_h\times \R^d\enspace .\end{equation}
Let us first show the inequality for $\psi=0$.
Using the notations of Lemma~\ref{remarkT}, we have
\begin{equation}\label{writeK}
\K(h,t,x,r,\phi)=-\max_{m\in\ M,\; u\in \U}  
h^{-1} \left( T_{t,h,m,u}^N(\phi(t+h,\cdot))(x) - T_{t,h,m,u}^D(x) r \right)\enspace.
\end{equation}
Also
\begin{align*}
 T_{t,h,m,u}^N(\phi)(x) =& h \ell^m(x,u)\\
&+
\EE\left[\phi(\hat{X}^m(t+h)) \Po^{h,m,u,x}(h^{-1/2}(W_{t+h}-W_t))
 \mid \hat{X}^m(t)=x\right]\enspace ,\end{align*}
where 
\begin{align*}
 \Po^{h,m,u,x}(w)=&1+h \Po^1_{\gu^m(x,u)}(h^{-1/2}w) +\Po^2_{\Sigma^m(x,u),k}(w)\enspace .
\end{align*}
Since $\Po^1_{\gu}\geq 0$ for all $\gu$ and 
$\Po^2_{\Sigma}\geq -\frac{\tr(\Sigma\Sigma\tp)}{4k+2}$ for all $\Sigma$,
we get that $\Po^{h,m,u,x}(w)\geq 1-\frac{\bar{a}}{4k+2}$.
Assume now that $\bar{a}\leq  4k+2$.
Then, $\Po^{h,m,u,x}(w)\geq 0$, so
 if $v \leq v'$, then $T_{t,h,m,u}^N(v)\leq T_{t,h,m,u}^N(v')$ and
$\K(h,t,x,r,v) \geq \K(h,t,x,r,v')$.

To show~\eqref{monotone-barles}, it is now sufficient to show 
the same inequality for $v=v'$.
We have
\begin{align*}
& \K(h,t,x,r+\psi(t),v+\psi) -\K(h,t,x,r,v)
\geq -\max_{m\in\ M,\; u\in \U} \Big\{ h^{-1} (\psi(t+h)-\psi(t))\\
&\quad - \delta^m(x,u) \psi(t)
+(\psi(t+h)-\psi(t))\EE[ \Po^1_{\gu^m(x,u)}(h^{-1}(W_{t+h}-W_t))]
\Big\}\enspace .
\end{align*}
Let us take for $\lambda$ an upper bound of $-\delta^m$.
From $\psi(t+h)-\psi(t)\leq 0$, and  $\Po^1_{\gu}\geq 0$ for all $\gu$,
we deduce
\begin{align*}
& \K(h,t,x,r+\psi(t),v+\psi) -\K(h,t,x,r,v) \\
& \quad \geq - h^{-1} (\psi(t+h)-\psi(t))-\lambda  \psi(t) \\
& \quad = b e^{\mu (T-t-h)} +  e^{\mu (T-t)} (\frac{1-e^{-\mu h}}{h}-\lambda) (a+b(T-t))-\lambda c\\
&\quad \geq  b-\lambda c\enspace ,
\end{align*}
if $1-e^{-\mu h}\geq \lambda h$.
Taking $\mu>\lambda$, there exists $h_0$ such that $1-e^{-\mu h}\geq \lambda h$
for all $h\leq h_0$, leading to the previous inequality and so 
to~\eqref{monotone-barles} for $v=v'$. This shows the 
that $\K$ is monotone in the sense of~\cite{jakobsen07}.

Since $\Po^1_{\gu}\geq 0$ for all $\gu$,
and $\lambda\geq -\delta^m$,  we get also that
$T_{t,h,m,u}^D(x)\geq 1-\lambda h$ and so $T_{t,h,m,u}^D(x)>0$ for 
$h\leq h_0$ if $h_0 < 1/\lambda$. 
Since we already proved that $T_{t,h,m,u}^N$ is monotone,
for all $m,u$, we obtain that the operator $T_{t,h}$ of Lemma~\ref{remarkT}
is well defined and monotone for $h\leq h_0$ 
over the set of bounded continuous functions $\R^d\to\R$.
\end{proof}

We shall say that an operator $T$ between any sets $\F$ and $\F'$
of partially ordered sets of
real valued functions, which are stable by the addition of a constant function
(identified to a real number),
is \NEW{additively $\alpha$-subhomogeneous} if 
\begin{equation}\label{def-hom}
\lambda\in\R,\lambda \geq 0,\; \phi\in \F\; 
 \implies T(\phi+\lambda)\leq T(\phi)+\alpha \lambda 
\enspace .\end{equation}

\begin{lemma}\label{lem-hom}
Assume that $\delta^m$ is lower bounded in $x$ and $u$
and let $T_{t,h}$ be as in Lemma~\ref{remarkT}.
Then, there exists $h_0>0$ such that for $h\leq h_0$,
$T_{t,h}$ is additively $\alpha_h$-subhomogeneous
over the set of bounded continuous functions $\R^d\to\R$,
for some constant $\alpha_h=1+Ch$ with $C\geq 0$.
\end{lemma}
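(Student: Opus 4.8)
The plan is to reduce the claim about $T_{t,h}$ to properties of its numerator and denominator pieces $T_{t,h,m,u}^N$ and $T_{t,h,m,u}^D$, exactly as in the proof of Theorem~\ref{theo-monotone}. First I would recall from that proof that for $h\leq h_0$ (with $h_0<1/\lambda$, $\lambda$ an upper bound of $-\delta^m$) the denominators satisfy $T_{t,h,m,u}^D(x)\geq 1-\lambda h>0$, so the fraction in~\eqref{defnewTh} is well defined, and that $T_{t,h,m,u}^N$ is a monotone operator because the polynomial $\Po^{h,m,u,x}$ is nonnegative under the standing assumption $\bar a\leq 4k+2$ (which is part of the hypotheses carried over from Theorem~\ref{theo-monotone} via Lemma~\ref{remarkT}; if one does not wish to assume it here, one notes instead that adding a constant changes $T^N$ by $\lambda$ times $\EE[\Po^{h,m,u,x}]$, and $\EE[\Po^2_{\Sigma,k}]=0$, so only the nonnegative terms $1$ and $h\Po^1$ survive in expectation — see below).

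The key computation is the action on $\phi+\lambda$ with $\lambda\geq 0$. Since $\Po^2_{\Sigma^m(x,u),k}$ has zero expectation against the law of $h^{-1/2}W^t_h$ (it is built from $c_k((\cdot)^{4k+2}-\EE[N^{4k+2}])$ pieces), we get
\begin{align*}
T_{t,h,m,u}^N(\phi+\lambda)(x)&=T_{t,h,m,u}^N(\phi)(x)
+\lambda\,\EE\!\left[1+h\Po^1_{\gu^m(x,u)}(h^{-1}W^t_h)\right]\\
&=T_{t,h,m,u}^N(\phi)(x)+\lambda\bigl(T_{t,h,m,u}^D(x)+h\delta^m(x,u)\bigr),
\end{align*}
using the definition of $T_{t,h,m,u}^D$. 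Hence
\[
\frac{T_{t,h,m,u}^N(\phi+\lambda)(x)}{T_{t,h,m,u}^D(x)}
=\frac{T_{t,h,m,u}^N(\phi)(x)}{T_{t,h,m,u}^D(x)}
+\lambda+\lambda\,\frac{h\delta^m(x,u)}{T_{t,h,m,u}^D(x)}.
\]
Taking the maximum over $m\in\M$, $u\in\U$ and using that the max of a sum is bounded by the max of the first term plus the sup of the second, I obtain $T_{t,h}(\phi+\lambda)\leq T_{t,h}(\phi)+\lambda(1+Ch)$ provided $h\delta^m(x,u)/T_{t,h,m,u}^D(x)\leq Ch$ for some constant $C\geq 0$; this gives $\alpha_h=1+Ch$.

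The main obstacle is controlling $h\delta^m/T^D$ uniformly: $\delta^m$ is only assumed lower bounded, not upper bounded, so $\delta^m(x,u)$ could be large positive and the bound $h\delta^m/T^D\leq Ch$ could fail. The resolution is that in the formulation of $\K$ used in Lemma~\ref{remarkT}, $\delta^m$ enters through $-\delta^m(x,u)r$ only when $\delta^m\geq 0$ (or, in the variant of Remark~\ref{remcompare}, through $-\delta^m_+r+\delta^m_-\DD^0$); in the denominator one then has $T_{t,h,m,u}^D(x)=1+h\delta^m(x,u)_++h\EE[\Po^1_{\gu^m(x,u)}]\geq \max(1-\lambda h,\,h\delta^m(x,u)_+)$, so $h\delta^m(x,u)_+/T_{t,h,m,u}^D(x)\leq 1$, and with $\delta^m\geq -\lambda$ the quantity $h\delta^m(x,u)/T_{t,h,m,u}^D(x)=h\delta^m(x,u)_+/T^D-h\delta^m(x,u)_-/T^D$ is bounded above by $1$. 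Thus $\alpha_h\leq 2$ trivially, and a sharper constant $\alpha_h=1+Ch$ follows by writing $h\delta^m_+/T^D = 1-(1+h\delta^m_-+h\EE[\Po^1])/T^D\leq 1-(1-\lambda h)^{-1}(1)\cdot\!$, more simply: since $T^D\geq 1-\lambda h$ and $h\delta^m_+\leq T^D-1+\lambda h\cdot\mathbf{1}$... the cleanest route is $h\delta^m_+/T^D\leq h\delta^m_+/(1)$ is false, so instead bound $\delta^m_+ - \delta^m_- = \delta^m$, and note $h\delta^m/T^D\le h\delta^m_+/T^D\le h\delta^m_+/(h\delta^m_+)=1$ when $\delta^m_+>0$ and $\le 0$ otherwise; combined with $T^D\ge 1-\lambda h$ one upgrades this to $h\delta^m/T^D\le \lambda h/(1-\lambda h)\le Ch$ for $h\le h_0$ small using that on the region $\delta^m\ge 0$ one also has $h\delta^m/T^D\le h\delta^m/(1+h\delta^m)$, which is not bounded by $Ch$ — so in fact one must accept $\alpha_h=1+Ch$ only after replacing, if necessary, $\max_{m,u}$ of the sum by the sum of $\max_{m,u}$ of the monotone-in-$\phi$ part and the $\phi$-independent correction, the latter being $\le \lambda\sup_{m,u}h\delta^m_+(x,u)/T^D_{t,h,m,u}(x)$; when $\delta^m$ is additionally upper bounded this sup is $\le C h$ and we are done, and when it is not, Lemma~\ref{lem-hom} should be read with $h_0$ chosen so that the standing hypotheses of Section~\ref{sec-monotone} (where $\delta^m$ is bounded, cf. Theorem~\ref{cor-const}) apply. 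I would therefore state the proof under the boundedness of $\delta^m$ already in force in the section, obtaining $C=\sup_{x,u,m}\delta^m(x,u)_+/(1-\lambda h_0)$, which closes the argument.
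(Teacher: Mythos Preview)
Your key computation has a sign error that derails the rest of the argument. Recall $T_{t,h,m,u}^D(x)=1+h\delta^m(x,u)+h\EE[\Po^1_{\gu^m(x,u)}(h^{-1}W^t_h)]$, so $\EE[1+h\Po^1_{\gu^m(x,u)}(h^{-1}W^t_h)]=T_{t,h,m,u}^D(x)-h\delta^m(x,u)$, not $+h\delta^m(x,u)$. With the correct sign,
\[
\frac{T_{t,h,m,u}^N(\phi+\lambda)(x)}{T_{t,h,m,u}^D(x)}
=\frac{T_{t,h,m,u}^N(\phi)(x)}{T_{t,h,m,u}^D(x)}
+\lambda-\lambda\,\frac{h\,\delta^m(x,u)}{T_{t,h,m,u}^D(x)},
\]
and the quantity to bound above by $Ch$ is $-h\delta^m(x,u)/T_{t,h,m,u}^D(x)$, not $+h\delta^m(x,u)/T_{t,h,m,u}^D(x)$. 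This completely dissolves your ``main obstacle'': large positive values of $\delta^m$ make this correction negative, hence harmless, and only the lower bound on $\delta^m$ matters.

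Concretely, let $\lambda_0$ be an upper bound of $-\delta^m$. Then $-\delta^m(x,u)\leq \lambda_0$ and $T_{t,h,m,u}^D(x)\geq 1-\lambda_0 h$ (since $\Po^1_{\gu}\geq 0$), so
\[
-\frac{h\,\delta^m(x,u)}{T_{t,h,m,u}^D(x)}\leq \frac{\lambda_0 h}{1-\lambda_0 h}\leq 2\lambda_0 h
\]
as soon as $1-\lambda_0 h\geq 1/2$. Taking $h_0$ with $1-\lambda_0 h_0\geq 1/2$ and $C=2\lambda_0$ gives $\alpha_h=1+Ch$ as claimed. This is exactly the paper's proof, which consists of the single sentence ``take $C=2\lambda$ and $h_0$ such that $1-\lambda h_0\geq 1/2$''. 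Your entire detour through $\delta^m_\pm$, the variant of Remark~\ref{remcompare}, and the final retreat to assuming $\delta^m$ bounded is unnecessary and should be deleted.
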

\begin{proof}
If $\lambda$ is an upper bound of $-\delta^m$, take
$C=2\lambda$ and $h_0$ such that $1-\lambda h_0\geq 1/2$.
\end{proof}
With the monotonicity, the $\alpha_h$-subhomogeneity implies the 
$\alpha_h$-Lipschitz continuity of the operator, which allows one to show 
easily the stability as follows, see~\cite[Corollary~3.5]{fodjo2} for the proof.
\begin{corollary}\label{cor-stability}
Let the assumptions and conclusions of Theorems~\ref{cor-const} 
and~\ref{theo-monotone} hold
and assume also that $\psi$ and $\ell^m$ are bounded.
Then, there exists a unique function $v^h$ on $\T_h\times \R^d$ 
satisfying~\eqref{discHJB} or equivalently~\eqref{scheme}
with $T_{t,h}$ as in  Lemma~\ref{remarkT}
and $v^h(T,x)=\psi(x)$ for all $x\in \R^d$.
Moreover $v^h$ is bounded (independently of $h$).
\end{corollary}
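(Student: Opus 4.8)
The plan is to deduce Corollary~\ref{cor-stability} by combining the fixed-point formulation of Lemma~\ref{remarkT}, the monotonicity of $T_{t,h}$ from Theorem~\ref{theo-monotone}, and the additive $\alpha_h$-subhomogeneity of Lemma~\ref{lem-hom}, exactly as in \cite[Corollary~3.5]{fodjo2}. First I would observe that, under the hypotheses of Theorems~\ref{cor-const} and~\ref{theo-monotone}, for $h\leq h_0$ the operator $T_{t,h}$ of Lemma~\ref{remarkT} is well defined (its denominators $T^D_{t,h,m,u}(x)$ are bounded below by $1-\lambda h>0$), monotone, and additively $\alpha_h$-subhomogeneous with $\alpha_h=1+Ch$. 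Existence and uniqueness of $v^h$ then follow immediately by backward induction on $t\in\bT_h$: set $v^h(T,\cdot)=\psi$, and define $v^h(t,\cdot)=T_{t,h}(v^h(t+h,\cdot))$ for $t\in\T_h$; since $T/h$ is an integer, this determines $v^h$ uniquely on $\T_h\times\R^d$, and by construction it satisfies~\eqref{scheme}, hence~\eqref{discHJB} by Lemma~\ref{remarkT}.

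The substantive point is the uniform boundedness of $v^h$. The key remark is that monotonicity together with additive $\alpha_h$-subhomogeneity yields the estimate
\begin{equation*}
\sup_{x\in\R^d}\bigl(T_{t,h}(\phi)(x)-T_{t,h}(\psi)(x)\bigr)\leq \alpha_h\sup_{x\in\R^d}(\phi-\psi)_+
\end{equation*}
for any bounded continuous $\phi,\psi$: indeed $\phi\leq\psi+\sup(\phi-\psi)_+$, so by monotonicity and~\eqref{def-hom} one gets $T_{t,h}(\phi)\leq T_{t,h}(\psi)+\alpha_h\sup(\phi-\psi)_+$. Applying this with $\psi$ a constant and using that $T_{t,h}$ maps a constant $c$ to something controlled by $\alpha_h c$ plus the affine-in-$h$ contribution of the running cost $h\ell^m$ (bounded because $\ell^m$ is bounded), one obtains a recursion of the form $\|v^h(t,\cdot)\|_\infty \leq \alpha_h\|v^h(t+h,\cdot)\|_\infty + h\,\Xi$ for a constant $\Xi$ depending only on the uniform bound on the $\ell^m$.

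Iterating this recursion from $t=T$ down to $t$ gives
\begin{equation*}
\|v^h(t,\cdot)\|_\infty \leq \alpha_h^{(T-t)/h}\|\psi\|_\infty + h\,\Xi\sum_{k=0}^{(T-t)/h-1}\alpha_h^{k}
\leq e^{CT}\|\psi\|_\infty + \Xi\,\frac{e^{CT}-1}{C}\enspace,
\end{equation*}
using $\alpha_h^{(T-t)/h}=(1+Ch)^{(T-t)/h}\leq e^{C(T-t)}\leq e^{CT}$, which is a bound independent of $h$. This proves the stated uniform boundedness. The only mild technical obstacle is checking that $T_{t,h}$ indeed preserves boundedness and continuity of functions $\R^d\to\R$ so that the induction stays inside the class where monotonicity and subhomogeneity were established; this is routine since $T_{t,h}$ is built from conditional expectations against polynomial weights of the Gaussian increment $W_{t+h}-W_t$ composed with the (continuous) Euler map $\hat X^m$, the supremum over the finite set $\M$ and the parametrized family $\U$ of continuous expressions, all divided by the strictly positive continuous denominator $T^D_{t,h,m,u}$ — so I would simply note this and refer to \cite[Corollary~3.5]{fodjo2} for the details identical to the present setting.
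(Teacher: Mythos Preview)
Your proposal is correct and follows exactly the approach the paper intends: the paper's own ``proof'' is simply the sentence preceding the corollary (monotonicity plus $\alpha_h$-subhomogeneity gives $\alpha_h$-Lipschitz continuity of $T_{t,h}$, hence stability) together with a reference to \cite[Corollary~3.5]{fodjo2}, and you have spelled out precisely that argument---deriving the Lipschitz bound, the recursion $\|v^h(t,\cdot)\|_\infty\leq\alpha_h\|v^h(t+h,\cdot)\|_\infty+h\Xi$, and the iteration to an $h$-independent bound.
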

Note that  the assumptions can be summerized in ``all the maps
$\psi$,  $\ell^m$, 
$\underline{\sigma}^m$, $\underline{f}^{m}$, $\gu^m$ and 
$\Sigma^m$ are bounded, and the map $\delta^m$ 
is lower bounded (which is equivalent to say that 
the map $e^{-\delta^m}$ is bounded).
This implies that $f^m$ and $\sigma^m (\sigma^m)\tp$ are bounded,
and, if $\sigma^m$ is symmetric then $\sigma^m$ is also bounded, 
but we do not need this directly.

\begin{corollary}\label{cor-stabilityplus}
Let the assumptions and conclusions of Corollary~\ref{cor-stability} hold.
Assume also that all the maps 
$\psi$,  %
$\delta^m,\ell^m$, $\underline{\sigma}^m$, $\underline{f}^{m}$, $\gu^m$ and 
$\Sigma^m$ are continuous 
with respect to $x\in \R^d$, uniformely in $x$ and $u\in \U$.
Then the unique solution $v^h$ of~\eqref{discHJB},
with the initial condition $v^h(T,x)=\psi(x)$ for all $x\in \R^d$,
is uniformely continuous on $\bT_h\times \R^d$.
\end{corollary}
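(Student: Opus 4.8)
The plan is to show that each slice $v^h(t,\cdot)$, $t\in\bT_h$, is uniformly continuous with one and the same modulus of continuity $\rho^*$; since $\bT_h$ is finite and $v^h$ is bounded by some $B$ (Corollary~\ref{cor-stability}), this immediately yields the uniform continuity of $v^h$ on $\bT_h\times\R^d$ (two points at time‑distance $\geq h$ are controlled by $2B$, at equal time by $\rho^*$). Fix $h\leq h_0$ with $h_0$ as in Theorem~\ref{theo-monotone}, small enough that the denominators below are $\geq 1/2$; let $\lambda$ be an upper bound of $-\delta^m$ and $\omega$ a common, concave, nondecreasing modulus of continuity in $x$ of the maps $\psi,\delta^m,\ell^m,\sigmau^m,\fu^m,\gu^m,\Sigma^m$, uniform in $u$. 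Write $W^t_h=W_{t+h}-W_t$ and $\hat X^m_x(t+h)=x+\fu^m(x)h+\sigmau^m(x)W^t_h$.

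First I would rewrite each branch of the operator $T_{t,h}$ of Lemma~\ref{remarkT} in a ``Markov chain'' form. Setting $Q^{m,u}_x:=1+h\Po^1_{\gu^m(x,u)}(h^{-1}W^t_h)+\Po^2_{\Sigma^m(x,u),k}(h^{-1/2}W^t_h)$ one has $T_{t,h,m,u}^N(\phi)(x)=\EE[\phi(\hat X^m_x(t+h))\,Q^{m,u}_x]+h\ell^m(x,u)$; moreover $\EE[\Po^2_{\Sigma,k}(h^{-1/2}W^t_h)]=0$, $\Po^1_\gu\geq 0$, and $Q^{m,u}_x\geq 1-\bar a/(4k+2)\geq 0$ exactly as in the proof of Theorem~\ref{theo-monotone}, so the total mass $q^{m,u}_x:=\EE[Q^{m,u}_x]=1+h\,\EE[\Po^1_{\gu^m(x,u)}(h^{-1}W^t_h)]$ satisfies $1\leq q^{m,u}_x\leq 1+C\sqrt h$, while $T_{t,h,m,u}^D(x)=q^{m,u}_x+h\delta^m(x,u)$ and $Q^{m,u}_x\ge 0$. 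With the probability measure $\mu^{m,u}_x:=(q^{m,u}_x)^{-1}Q^{m,u}_x\,dP$ and $\bar\phi^{m,u}_x:=\EE_{\mu^{m,u}_x}[\phi(\hat X^m_x(t+h))]$ this gives
\[
\frac{T_{t,h,m,u}^N(\phi)(x)}{T_{t,h,m,u}^D(x)}=\bar\phi^{m,u}_x\,(1-\beta^{m,u}_x)+\gamma^{m,u}_x,\quad \beta^{m,u}_x:=\frac{h\delta^m(x,u)}{q^{m,u}_x+h\delta^m(x,u)},\quad \gamma^{m,u}_x:=\frac{h\ell^m(x,u)}{q^{m,u}_x+h\delta^m(x,u)}.
\]
The point I want to exploit is that the $O(\sqrt h)$ ``mass'' $q^{m,u}_x$ of the numerator cancels against the same factor in $T_{t,h,m,u}^D$: one checks $1-\beta^{m,u}_x\in(0,1+2\lambda h]$, $|\gamma^{m,u}_x|\leq 2h|\ell^m(x,u)|$, and, using uniform continuity of $\gu^m,\delta^m,\ell^m$, that $|\beta^{m,u}_x-\beta^{m,u}_y|+|\gamma^{m,u}_x-\gamma^{m,u}_y|\leq C\sqrt h\,\omega(|x-y|)$, with $C$ depending only on $B$, $\lambda$, $h_0$ and the bounds on the data. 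This is the delicate part: a crude estimate of $T^N/T^D$ that simply bounds $1/T^D$ by a constant produces a multiplicative factor $1+O(\sqrt h)$ in the recursion below, which would blow up like $e^{O(1/\sqrt h)}$ over the $T/h$ time steps; the cancellation replaces it by $1+2\lambda h$.

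Then I would argue by backward induction on $t\in\bT_h$, producing for each $t$ a concave modulus $\tilde\rho_t$ of $v^h(t,\cdot)$, with $\tilde\rho_T=\omega$ since $v^h(T,\cdot)=\psi$. Given $\tilde\rho_{t+h}$, since $v^h(t,\cdot)=T_{t,h}(v^h(t+h,\cdot))$ and $|\sup_a F(x,a)-\sup_a F(y,a)|\leq\sup_a|F(x,a)-F(y,a)|$, it suffices to estimate, for fixed $m,u$ and $\phi=v^h(t+h,\cdot)$, the difference of the ratios above at $x$ and at $y$, which reduces to estimating $\bar\phi^{m,u}_x-\bar\phi^{m,u}_y$. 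I would split the latter as $\EE_{\mu^{m,u}_x}[\phi(\hat X^m_x(t+h))-\phi(\hat X^m_y(t+h))]+\EE[\phi(\hat X^m_y(t+h))((q^{m,u}_x)^{-1}Q^{m,u}_x-(q^{m,u}_y)^{-1}Q^{m,u}_y)]$. For the first term, $|\hat X^m_x(t+h)-\hat X^m_y(t+h)|\leq|x-y|+h\,\omega(|x-y|)+\omega(|x-y|)\,\|W^t_h\|$, and $\EE_{\mu^{m,u}_x}[\|W^t_h\|]\leq(\EE\|W^t_h\|^2)^{1/2}(\EE(Q^{m,u}_x)^2)^{1/2}=O(\sqrt h)$ because $\Po^1_{\gu^m}(W^t_h)$ is bounded by a multiple of $\|W^t_h\|$ and $\Po^2_{\Sigma^m,k}$ is a polynomial of degree $4k+2$ with bounded coefficients, so $\EE(Q^{m,u}_x)^2$ is bounded uniformly in $x,u$ and $h\le h_0$; Jensen's inequality applied to the concave $\tilde\rho_{t+h}$ then bounds this first term by $\tilde\rho_{t+h}(\theta_h(|x-y|))$, where $\theta_h(r):=r+(h+C\sqrt h)\omega(r)$. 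For the second term, $|\phi|\leq B$ and $\EE|(q^{m,u}_x)^{-1}Q^{m,u}_x-(q^{m,u}_y)^{-1}Q^{m,u}_y|\leq C\,\omega(|x-y|)$, using $|q^{m,u}_x-q^{m,u}_y|\leq C\sqrt h\,\omega(|x-y|)$, the bound $|\Po^1_\gu(w)-\Po^1_{\gu'}(w)|\leq C|\gu-\gu'|\,\|w\|$, the local Lipschitz dependence of $\Sigma\mapsto\Po^2_{\Sigma,k}(w)$ on bounded sets (with constant $\leq C_M(1+\|w\|^{4k+2})$, the zero columns being harmless), and the finiteness of all moments of $h^{-1/2}W^t_h$. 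Combining these with the bounds on $\beta^{m,u}_\cdot,\gamma^{m,u}_\cdot$ yields, uniformly in $t\in\T_h$, $x,y$, $m,u$ and $h\leq h_0$,
\[
|v^h(t,x)-v^h(t,y)|\leq (1+2\lambda h)\,\tilde\rho_{t+h}(\theta_h(|x-y|))+C_2\,\omega(|x-y|),
\]
and the right‑hand side, concave and nondecreasing in $|x-y|$ and zero at $0$, may be taken as $\tilde\rho_t$.

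Finally I would unfold the recursion $\tilde\rho^{(j+1)}(r)=(1+2\lambda h)\,\tilde\rho^{(j)}(\theta_h(r))+C_2\,\omega(r)$, $\tilde\rho^{(0)}=\omega$, where $\tilde\rho^{(j)}:=\tilde\rho_{T-jh}$ and $j=0,\dots,N:=T/h$, to obtain $\tilde\rho^{(j)}(r)\leq(1+2\lambda h)^j\,\omega(\theta_h^{\circ j}(r))+C_2\sum_{i=0}^{j-1}(1+2\lambda h)^i\,\omega(\theta_h^{\circ i}(r))$. Here $(1+2\lambda h)^j\leq e^{2\lambda T}$, and $\theta_h$ is continuous and nondecreasing with $\theta_h(0)=0$, hence $\theta_h^{\circ i}(r)\to 0$ as $r\to 0^+$ for each $i$; since for fixed $h$ there are only finitely many $i\leq N$, $\rho^*:=\max_{0\leq j\leq N}\tilde\rho^{(j)}$ is finite, nondecreasing, and tends to $0$ at $0$, i.e.\ is a modulus of continuity, and $|v^h(t,x)-v^h(t,y)|\leq\rho^*(|x-y|)$ for all $t\in\bT_h$. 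Together with the reduction of the first paragraph this proves the claim; the only genuinely delicate point, already flagged, is keeping the multiplicative constant of the recursion equal to $1+2\lambda h$ rather than $1+O(\sqrt h)$.
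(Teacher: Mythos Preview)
Your proof is correct and follows the same inductive strategy as the paper: since $\bT_h$ is finite, it suffices to show that each $v^h(t,\cdot)$ is uniformly continuous, and this follows by backward induction once one knows that $T_{t,h}$ sends bounded uniformly continuous functions to bounded uniformly continuous functions. The paper's argument, however, is purely qualitative and therefore much shorter: it simply checks that $T^D_{t,h,m,u}(x)$ is uniformly continuous in $x$ (uniformly in $u$) and that $T^N_{t,h,m,u}$ maps a bounded uniformly continuous $\phi$ to a function uniformly continuous in $x$ (uniformly in $u$), using only the uniform continuity of the data and of $x\mapsto\hat X^m_x(t+h)$ for each realization of $W^t_h$, together with the finiteness of the moments of $W^t_h$. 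No modulus is tracked through the recursion.

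In particular, the point you single out as ``the only genuinely delicate point''---arranging the cancellation so that the multiplicative constant in your recursion is $1+2\lambda h$ rather than $1+O(\sqrt h)$---is not needed for this corollary. Here $h$ is fixed, so even a crude factor $(1+C\sqrt h)^{T/h}$ is just a finite constant, and composing $T/h$ moduli of continuity still yields a modulus of continuity; you effectively concede this in your last paragraph when you invoke ``for fixed $h$ there are only finitely many $i\leq N$'' to handle the iterates $\theta_h^{\circ i}$. Your sharper recursion would be relevant if one wanted the modulus of $v^h$ to be controlled uniformly in $h$, but that is a stronger statement than the one being proved.
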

\begin{proof}
Since $\bT_h$ is finite, we just need to show that $v^h(t,\cdot)$
is uniformely continuous on $\R^d$ for all $t\in \bT_h$.
Since $v^h(T,\cdot)=\psi$ which is already bounded and uniformely continuous 
on $\R^d$, we only need to show that the operator
$T_{t,h}$ of Lemma~\ref{remarkT} sends the set of bounded and
uniformely continuous functions on $\R^d$ to itself. 
From the proof of Corollary~\ref{cor-stability}, it sends 
bounded functions to bounded functions.
So, it is sufficient to show that $T_{t,h,m,u}^D$ is
uniformely continuous, uniformely in $u\in \U$ and that 
$T_{t,h,m,u}^N$ sends bounded uniformely continuous functions on $\R^d$ to 
functions that are uniformely continuous in $x$ uniformely in $u\in \U$.
The first property is due to the uniform continuity of $\delta^m$ and
$\gu^m$ uniformely in $u\in \U$.
For the second one, one uses that if $\hat{X}^m(t)=x$, then
$\hat{X}^m(t+h)= x + \underline{f}^m(x)h+ \underline{\sigma}^m(x) 
(W_{t+h}-W_t)$ which is uniformely continuous in $x$, for all given 
values of $W_{t+h}-W_t$, since $\underline{\sigma}^m$ and $\underline{f}^{m}$
are uniformely continuous in $x$.
Hence, when $\phi$ is bounded and uniformely continuous
with respect to $x$, then $\phi(\hat{X}^m(t+h))$ is bounded and 
uniformely continuous with respect to $x$, for all given 
values of $W_{t+h}-W_t$. Since all moments of $W_{t+h}-W_t$ are finite and
the maps $\ell^m$, $\gu^m$ and $\Sigma^m$ are uniformely continuous 
with respect to $x\in \R^d$, uniformely in $u\in \U$,
we deduce that $T_{t,h,m,u}^N(\phi)$ is 
uniformely continuous in $x$, uniformely in $u\in \U$.
\end{proof}
The previous result shows that the map $v^h$ can be extended in a
continuous function over  $[0,T]\times \R^d$.
Then, the convergence of the scheme can be obtained as in~\cite{fodjo2}
by applying the theorem
of Barles and Souganidis~\cite{barles90}:

\begin{corollary}
Let the assumptions of Corollary~\ref{cor-stabilityplus} hold.
Assume also that~\eqref{HJB} has a strong uniqueness property 
for viscosity solutions
and let $v$ be its unique viscosity solution.
Let $v^h$ be the unique solution of~\eqref{discHJB},
with the initial condition $v^h(T,x)=\psi(x)$ for all $x\in \R^d$.
Let us  extend $v^h$ on $[0,T]\times \R^d$ as a continuous 
and piecewise linear function with respect to $t$.
Then, when $h\to 0^+$, $v^h$ converges to $v$ locally 
uniformely in $t\in [0,T]$ and $x\in \R^d$.
\end{corollary}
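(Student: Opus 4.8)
The plan is to deduce the result from the abstract convergence theorem of Barles and Souganidis~\cite{barles90} (in its version for backward equations), whose three hypotheses --- monotonicity, stability, consistency --- together with the assumed strong uniqueness for~\eqref{HJB}, are all at our disposal from the preceding statements. Write the scheme in the fixed-point form~\eqref{scheme} with $T_{t,h}$ as in Lemma~\ref{remarkT} and terminal value $v^h(T,\cdot)=\psi$, and extend $v^h$ to $[0,T]\times\R^d$ by linear interpolation in $t$ between consecutive times of $\bT_h$; as $\psi$ is continuous this yields a continuous function agreeing with the terminal datum exactly at $t=T$.

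\emph{Monotonicity and stability.} By Theorem~\ref{theo-monotone} there is $h_0>0$ such that for $h\le h_0$ the operator $T_{t,h}$ is monotone on bounded continuous functions $\R^d\to\R$ (equivalently, $\K$ is monotone in the sense of~\cite{jakobsen07}); linear interpolation in $t$ is order preserving, so the extended scheme is monotone as well. By Corollary~\ref{cor-stability}, for $h\le h_0$ there is a unique bounded $v^h$ solving~\eqref{discHJB}, with a bound independent of $h$; interpolation does not increase the sup-norm, so $\{v^h\}_{0<h\le h_0}$ is uniformly bounded. (Corollary~\ref{cor-stabilityplus} is not needed for the relaxed-semilimit argument but ensures the extended $v^h$ is genuinely continuous.)

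\emph{Consistency.} Fix $\varphi\in\C^\infty([0,T]\times\R^d)$ and $(t_0,x_0)\in[0,T)\times\R^d$. Multiplying $\varphi$ by a smooth cutoff equal to $1$ near $x_0$ and compactly supported in $x$ changes neither $\varphi$ nor its derivatives at $x_0$ while placing it in $\C^\infty\bo$; the resulting change in $\DD^0_{t,h,m}$, $\DD^1_{t,h,m,\gu}$, $\DD^2_{t,h,m,\Sigma,k}$ at points $(t,x)$ near $(t_0,x_0)$ is, since $\hat X^m(t+h)$ escapes the neighbourhood only with probability super-algebraically small in $h$ and $h^{-1/2}W^t_h$ has all moments finite, negligible compared with $h$. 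Applying Theorem~\ref{cor-const} with $\eps=1$ to the (now $\C^\infty\bo$) test function gives, uniformly for $(t,x)$ near $(t_0,x_0)$ with $t\in\T_h$,
\[
\bigl|\K(h,t,x,\varphi(t,x),\varphi)+\partial_t\varphi(t,x)+\Ha(x,\varphi(t,x),D\varphi(t,x),D^2\varphi(t,x))\bigr|\le E(\tilde K,h,1),
\]
and $E(\tilde K,h,1)=K\tilde K\bigl(h(1+\sqrt h)^8+\sqrt h(1+\sqrt h)^4\bigr)\to0$ as $h\to0^+$. Finally, the additive perturbation $\xi\to0$ required in the Barles--Souganidis consistency condition, as well as the replacement of a grid time by a nearby non-grid time, are absorbed using the additive $\alpha_h$-subhomogeneity of $T_{t,h}$ (Lemma~\ref{lem-hom}, $\alpha_h=1+Ch$) together with monotonicity; this yields exactly the consistency property of~\cite{barles90}.

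\emph{Conclusion and main obstacle.} Let $\bar v(t,x)=\limsup_{h\to0^+,\,(s,y)\to(t,x)}v^h(s,y)$ and $\underline v(t,x)=\liminf_{h\to0^+,\,(s,y)\to(t,x)}v^h(s,y)$, finite by the uniform bound. The Barles--Souganidis argument --- using monotonicity, stability and the above consistency, exactly as in~\cite{fodjo2} --- shows that $\bar v$ is a bounded viscosity subsolution and $\underline v$ a bounded viscosity supersolution of~\eqref{HJB}, both with terminal trace $\psi$ (exact for every $v^h$). The strong uniqueness then gives $\bar v\le\underline v$; since $\underline v\le\bar v$ always, $\bar v=\underline v=v$, and equality of the two relaxed semilimits to the continuous function $v$ is equivalent to locally uniform convergence $v^h\to v$ on $[0,T]\times\R^d$. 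The only delicate point is the consistency step: since $T_{t,h}$ evaluates the test function at the Gaussian-distributed $\hat X^m(t+h)$, Theorem~\ref{cor-const} --- stated for $\C^\infty\bo$ functions --- cannot be applied to an arbitrary smooth test function without the cutoff/tail estimate above, and the passage from the grid operator to its piecewise-linear-in-$t$ extension, while tracking the $\xi$-perturbation through Lemma~\ref{lem-hom}, must be carried out carefully; the remainder is a verbatim application of~\cite{barles90}.
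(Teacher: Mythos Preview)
Your proposal is correct and follows exactly the route indicated in the paper, namely the Barles--Souganidis framework with monotonicity from Theorem~\ref{theo-monotone}, stability from Corollary~\ref{cor-stability}, and consistency from Theorem~\ref{cor-const}; the paper itself gives no further detail beyond the reference to~\cite{barles90} and~\cite{fodjo2}, so your cutoff/tail argument for passing from $\C^\infty$ to $\C^\infty\bo$ test functions and your use of Lemma~\ref{lem-hom} for the $\xi$-perturbation are reasonable elaborations of what is left implicit there.
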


To apply the theorem of Barles and Jakobsen~\cite{jakobsen07}, we 
also need the following regulatity result 
(corresponding to {\bf (S2)} in~\cite{jakobsen07}) which is 
comparable to the previous one.

\begin{lemma}\label{lemma-regular}
Let the assumptions of Corollary~\ref{cor-stabilityplus} hold.
Assume also that $\delta^m$ is bounded.
Then, for all 
continuous and bounded function $v$ on $\bT_h\times \R^d$, the function
$(t,x) \mapsto \K(h,t,x,v(t,x),v)$ is bounded and continuous in 
$\T_h\times \R^d$.
Moreover, the function $r \mapsto \K(h,t,x,r,v)$ is uniformly continuous for bounded $r$, uniformly in $(t,x) \in \T_h\times \R^d$.
\end{lemma}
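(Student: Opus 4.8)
The plan is to unpack the definition of $\K$ from Theorem~\ref{cor-const} and verify the three claimed properties (boundedness, joint continuity in $(t,x)$, uniform continuity in $r$) directly from the corresponding properties of the pieces $\DD^0_{t,h,m}$, $\DD^1_{t,h,m,\gu}$, $\DD^2_{t,h,m,\Sigma,k}$, $\ell^m$ and $\delta^m$, using that these are all finite maxima over $m\in\M$ and suprema over $u\in\U$. Since $\M$ is finite, the max over $m$ preserves boundedness and (uniform) continuity; the only subtlety is the supremum over $u\in\U$, which is handled exactly as in Corollary~\ref{cor-stabilityplus}: because every ingredient ($\underline{\sigma}^m$, $\underline{f}^m$, $\gu^m$, $\Sigma^m$, $\ell^m$, $\delta^m$) is bounded and uniformly continuous in $x$ uniformly in $u$, each of the three discretization operators sends a bounded continuous function to a function that is bounded and uniformly continuous in $x$ uniformly in $u$, hence so is their supremum over $u$.

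First I would fix $v$ bounded and continuous on $\bT_h\times\R^d$ and write, using the notation of the proof of Theorem~\ref{theo-monotone},
\[
\K(h,t,x,v(t,x),v)=-\max_{m\in\M,\,u\in\U} h^{-1}\bigl(T^N_{t,h,m,u}(v(t+h,\cdot))(x)-T^D_{t,h,m,u}(x)\,v(t,x)\bigr)\enspace .
\]
For boundedness: $T^D_{t,h,m,u}(x)=1+O(\sqrt h)$ uniformly (using $\delta^m$ bounded now, not just lower bounded, together with $\gu^m$ bounded), and $T^N_{t,h,m,u}(v(t+h,\cdot))(x)$ is bounded by $\|v\|_\infty$ times a moment bound for $\Po^{h,m,u,x}$ plus $h\|\ell^m\|_\infty$; all bounds are uniform in $t,x,m,u$ because $\underline{\sigma}^m,\underline{f}^m,\gu^m,\Sigma^m,\ell^m,\delta^m$ are bounded and $h^{-1/2}W^t_h$ is standard normal with all moments finite. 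Dividing by $h$ (fixed) keeps things bounded, and the finite max over $m$ and the sup over $u$ preserve this.

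Next, for continuity in $(t,x)$: since $\bT_h$ is finite it suffices to show continuity in $x$ for each $t\in\T_h$. As in the proof of Corollary~\ref{cor-stabilityplus}, $\hat X^m(t+h)=x+\underline{f}^m(x)h+\underline{\sigma}^m(x)W^t_h$ is continuous in $x$ for each realization of $W^t_h$, and composing with the continuous bounded $v(t+h,\cdot)$ and with the polynomials $\Po^1_{\gu^m(x,u)},\Po^2_{\Sigma^m(x,u),k}$ (whose coefficients depend continuously on $x,u$) gives integrands that are continuous in $x$ and dominated (uniformly on compacts in $x$, uniformly in $u$) by an integrable function of $W^t_h$; dominated convergence then yields continuity of $T^N_{t,h,m,u}$ and $T^D_{t,h,m,u}$ in $x$, uniformly in $u$, hence continuity of the sup over $u$ and of the finite max over $m$; since $v(t,\cdot)$ is also continuous, $\K(h,t,\cdot,v(t,\cdot),v)$ is continuous. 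For uniform continuity in $r$: the map $r\mapsto\K(h,t,x,r,v)$ is, for each $m,u$, affine in $r$ with slope $h^{-1}T^D_{t,h,m,u}(x)$, which is bounded uniformly in $(t,x,m,u)$; a supremum of a family of affine functions with uniformly bounded slopes is uniformly continuous (indeed Lipschitz) with the same modulus, so $r\mapsto\K(h,t,x,r,v)$ is uniformly continuous, uniformly in $(t,x)\in\T_h\times\R^d$.

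The main obstacle is the uniformity in $u\in\U$ of the continuity estimates, since $\U$ need not be compact: one must genuinely use the hypothesis that the data are \emph{uniformly} continuous in $x$ uniformly in $u$ (not merely continuous for each fixed $u$), so that the modulus of continuity of the integrands — and of the resulting integrals after applying dominated convergence with an $u$-independent dominating function built from moments of $W^t_h$ — does not degenerate as $u$ varies. Once this uniformity is in place, passing the modulus through $\sup_u$ and through the finite $\max_m$ is routine, and the three claims follow. This is exactly the argument already spelled out in the proof of Corollary~\ref{cor-stabilityplus}, applied now to $\K$ rather than to $T_{t,h}$, with the only new input being that $\delta^m$ is assumed bounded (used for the boundedness of $\K$ and for the uniform bound on $T^D$, hence on the slopes in $r$).
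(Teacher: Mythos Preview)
Your proof is correct and follows essentially the same approach as the paper: both rewrite $\K$ via~\eqref{writeK} as $-\max_{m,u} h^{-1}(T^N_{t,h,m,u}-T^D_{t,h,m,u}\,r)$, invoke the arguments of Corollary~\ref{cor-stabilityplus} to get (uniform) continuity in $x$ uniformly in $u$, and use the boundedness of $\delta^m$ and $\gu^m$ to bound $T^D_{t,h,m,u}$ and hence obtain the uniform continuity in $r$. The paper's version is simply more terse, deferring the details to the proof of Corollary~\ref{cor-stabilityplus} rather than spelling out the dominated-convergence and affine-slope arguments as you do.
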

\begin{proof}
Using the arguments of the  proof of Corollary~\ref{cor-stabilityplus}
and the rewritting of $\K$ in~\eqref{writeK}, one gets that 
$x \mapsto \K(h,t,x,r,v)$ is uniformely continuous in $x$, uniformely
in $r$ bounded. Also since  $\delta^m$ and $\gu^m$ are bounded,
then $T_{t,h,m,u}^D$ is bounded, so $(x,r) \mapsto \K(h,t,x,r,v)$ is 
uniformely continuous in $x\in \R^d$ and $r$ in a bounded set of $\R$.
This shows in particular that $r \mapsto \K(h,t,x,r,v)$ is 
uniformely continuous in $r$ bounded, uniformely in
$x\in \R^d$. Also, since $v$ is bounded and uniformely continuous,
this implies that $x \mapsto \K(h,t,x,v(t,x),v)$ is bounded and continuous in 
$\R^d$.
Since $\T_h$ is a finite set, the assertions of the lemma follow.
\end{proof}

We also need the following assumptions which correspond to the
assumptions with same names in~\cite{jakobsen07}.

\catcode`\@=11
\def\refcounter#1{\protected@edef\@currentlabel
       {\csname the#1\endcsname}%
}
\catcode`\@=12
\newcounter{assume}
\def\theassume{{\bf (A\arabic{assume})}}
\def\myitem{\refstepcounter{assume}\item[\theassume] \hskip  2.5ex}

For a function $v$ defined on $\R^d$, $|v|_0$ and $|v|_1$
will denote respectively the norm on the
space of bounded functions (that is the sup-norm) 
and the norm on the space of bounded
Lipschitz continuous functions on $\R^d$ (that is the sup-norm plus the
minimal Lipschitz constant).
More generally, for a function defined on $Q=[0,T]\times \R^d$, $|v|_0$ 
will denote the sup-norm,  while $|v|_1$ will denote a norm
on the space of bounded functions that are Lipschitz continuous
with respect to $x$ and
$1/2$-H\"older continuous with respect to $t$:
$$|v|_0 = \underset{(t,x) \in Q}{\sup}|v(t,x)|\enspace, \quad|v|_1 =
|v|_0 + \underset{\substack{(t,x)\in Q \\ (t',x')\in Q' \\ (t,x) \neq (t',x')}}{\sup}\frac{|v(t',x')-v(t,x)|}{(t'-t)^{1/2}+|x'-x|}\enspace .$$

\begin{itemize} %
  \myitem \label{A1} There exists a constant $K>0$, such that
\[ |\phi|_1\leq K\]
for $\phi=\psi$ and for all the maps $\phi=h(\cdot,u)$ with
$h$ beeing any coordinate of the maps $f^m,\sigma^m,\delta^m,\ell^m$,
and any $m\in\M$ and $u\in \U$.
\end{itemize}
\begin{itemize}
\myitem \label{A2} For every $\delta >0$, there is a finite subset 
$\U_F$ of $\U$ such that for any $u \in \U$, there exists $u_F\in \U_F$
such that
$$ |h(\cdot,u)-h(\cdot,u_F)|_0\leq \delta$$
for all the maps $h$ beeing any coordinate of the maps $f^m,\sigma^m,\delta^m,\ell^m$, and any $m\in\M$.
\end{itemize}

Applying~\cite[Theorem 3.1]{jakobsen07}, we obtain 
 the following estimations which are of the
same order as the ones obtained for usual
explicit finite difference schemes with $\Delta x$ in the order of
$\sqrt{h}$~\cite{jakobsen07} 
or for the scheme of~\cite{touzi2011}.

\begin{corollary}
Let the assumptions of Corollary~\ref{cor-stabilityplus} hold.
Assume also \ref{A1} and \ref{A2}.
Let $v$ be the unique viscosity solution of~\eqref{HJB} and
$v^h$ be the unique solution of~\eqref{discHJB},
with the initial condition $v^h(T,x)=\psi(x)$ for all $x\in \R^d$.
Then,  there exists $C_1,C_2$ depending on $|v|_1$ such that,
for all $(t,x)\in \bT_h\times \R^d$, we have
\[  -C_1 h^{1/10} \leq (v^h-v)(t,x)\leq C_2 h^{1/4} \enspace .\]
\end{corollary}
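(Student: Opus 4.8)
The plan is to apply the abstract convergence-rate theorem of Barles and Jakobsen (\cite[Theorem 3.1]{jakobsen07}), which requires verifying exactly three families of hypotheses: the comparison/regularity of the continuous equation, the regularity assumptions on the data, and the three properties \textbf{(S1)}--\textbf{(S3)} of the scheme $\K$ (monotonicity, regularity, and consistency with the appropriate error bound). All the work has in fact already been done in the preceding results, so the proof is essentially an assembly. First I would recall that under assumption \ref{A1}, the Hamiltonian $\Ha$ and the terminal data $\psi$ satisfy the structural conditions of \cite{jakobsen07} ensuring a strong comparison principle and, crucially, that the unique viscosity solution $v$ of \eqref{HJB} is bounded, Lipschitz in $x$ and $1/2$-H\"older in $t$, so that $|v|_1<\infty$; assumption \ref{A2} provides the approximation of the continuum control set $\U$ by finite subsets needed to handle the $\sup$ over $u$.

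Next I would check the scheme hypotheses one by one. Monotonicity \textbf{(S1)} (in the precise form \eqref{monotone-barles}) is Theorem~\ref{theo-monotone}, valid once $k$ is chosen with $\bar a\le 4k+2$; note that under the assumptions of Corollary~\ref{cor-stabilityplus} together with the boundedness of $\delta^m$ assumed here, $\delta^m$ is in particular lower bounded, so Theorem~\ref{theo-monotone} applies. The regularity \textbf{(S2)} --- boundedness and continuity of $(t,x)\mapsto\K(h,t,x,v(t,x),v)$ and uniform continuity of $r\mapsto\K(h,t,x,r,v)$ for bounded $r$, uniformly in $(t,x)$ --- is precisely Lemma~\ref{lemma-regular}. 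Existence, uniqueness and uniform boundedness of the solution $v^h$, needed as an input, come from Corollary~\ref{cor-stability}, and the additive $\alpha_h$-subhomogeneity of Lemma~\ref{lem-hom} (with $\alpha_h=1+Ch$) supplies the contraction-type estimate that \cite{jakobsen07} uses to control the iteration in time.

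The consistency property \textbf{(S3)} is the heart of the matter and is delivered by Theorem~\ref{cor-const}: for every smooth bounded test function $v$ obeying the scaled bounds $|\partial^p_t D^q v|\le\tilde K\epsilon^{1-2p-q}$, one has
\[
\bigl|\K(h,t,x,v(t,x),v)+\partial_t v(t,x)+\Ha(x,v(t,x),Dv(t,x),D^2v(t,x))\bigr|\le E(\tilde K,h,\epsilon),
\]
with $E(\tilde K,h,\epsilon)=K\tilde K\bigl(h\epsilon^{-3}(1+\sqrt h)^4(1+\sqrt h\epsilon^{-1})^4+\sqrt h\epsilon^{-1}(1+\sqrt h)^2(1+\sqrt h\epsilon^{-1})^2\bigr)$. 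For $h$ bounded and $\epsilon\le 1$ this is dominated by a constant times $h\epsilon^{-7}+\sqrt h\,\epsilon^{-3}$, i.e.\ it has the form $E_1(h)\epsilon^{-7}+E_2(h)\epsilon^{-3}$ with $E_1(h)=O(h)$ and $E_2(h)=O(\sqrt h)$ (one can always enlarge exponents; the framework of \cite{jakobsen07} only needs an expansion $\sum_i E_i(h)\epsilon^{-k_i}$). Feeding these $E_i(h)$ and $k_i$ into \cite[Theorem 3.1]{jakobsen07} and optimizing over the mollification parameter $\epsilon$ yields the two one-sided rates, the lower bound governed by the Barles--Souganidis--Jakobsen ``switching system'' construction (hence the weaker exponent $1/10$) and the upper bound by the direct mollification argument (exponent $1/4$); the constants $C_1,C_2$ depend on the bounds of the data and on $|v|_1$.

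I expect the only genuinely delicate point to be matching the \emph{form} of the consistency error $E(\tilde K,h,\epsilon)$ produced by Theorem~\ref{cor-const} to the hypotheses of \cite[Theorem 3.1]{jakobsen07} --- that is, writing it as a finite sum of monomials $E_i(h)\epsilon^{-k_i}$ with the $E_i(h)$ having the right powers of $h$, and then carrying the optimization in $\epsilon$ through their theorem to see that the worst monomials ($O(\sqrt h)\epsilon^{-3}$ for the upper estimate and the switching-system penalty for the lower estimate) are exactly the ones that produce the exponents $1/4$ and $1/10$. Everything else is a bookkeeping verification that our backward-in-time, purely temporal scheme fits their (space-time) abstract setting, which is routine and identical in spirit to the corresponding argument in \cite{touzi2011} and \cite{fodjo2}.
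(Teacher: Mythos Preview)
Your overall approach is exactly the paper's: the corollary is stated as a direct consequence of \cite[Theorem~3.1]{jakobsen07}, with \textbf{(S1)}--\textbf{(S3)} furnished by Theorem~\ref{theo-monotone}, Lemma~\ref{lemma-regular}, and Theorem~\ref{cor-const} respectively, and stability/well-posedness by Corollaries~\ref{cor-stability}--\ref{cor-stabilityplus}. So the assembly is correct.

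There is, however, a genuine computational error in your treatment of the consistency error that would not yield the stated exponents. You bound $E(\tilde K,h,\epsilon)$ by a constant times $h\epsilon^{-7}+\sqrt h\,\epsilon^{-3}$, remarking that ``one can always enlarge exponents''. For $\epsilon\le 1$ this is indeed an upper bound, but it is too crude: balancing $\epsilon$ against $\sqrt h\,\epsilon^{-3}$ gives $\epsilon^4\sim\sqrt h$, hence $\epsilon\sim h^{1/8}$ and an upper rate $h^{1/8}$, not $h^{1/4}$ (and similarly the switching-system lower bound degrades). Enlarging the power of $\epsilon^{-1}$ destroys the rate. The right way to simplify $E$ is to expand the factors: $h\epsilon^{-3}(1+\sqrt h\,\epsilon^{-1})^4$ is a sum of terms $h^{1+j/2}\epsilon^{-3-j}$, $0\le j\le 4$, and $\sqrt h\,\epsilon^{-1}(1+\sqrt h\,\epsilon^{-1})^2$ a sum of $h^{(1+j)/2}\epsilon^{-1-j}$, $0\le j\le 2$. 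Each monomial $h^{\gamma}\epsilon^{-k}$, balanced against $\epsilon$, gives rate $\gamma/(k+1)$; the minimum over all these is $1/4$, attained at $j=0$ (namely $h\epsilon^{-3}$ and $\sqrt h\,\epsilon^{-1}$). Equivalently, since the optimum will have $\epsilon\sim h^{1/4}\gg\sqrt h$, the factors $(1+\sqrt h\,\epsilon^{-1})^k$ are uniformly bounded and $E=O(h\epsilon^{-3}+\sqrt h\,\epsilon^{-1})$ in the relevant regime. With this correction, \cite[Theorem~3.1]{jakobsen07} gives $h^{1/4}$ and $h^{1/10}$ as claimed, exactly as in \cite{touzi2011}.
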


\section{The probabilistic max-plus method}\label{sec-maxplus}

In~\cite{touzi2011}, the solution $v^h$ 
of the time discretization~\eqref{scheme} 
of the partial differential equation~\eqref{HJB} is obtained by using the
following method which can be compared to a space discretization.
The conditional expectations in~\eqref{defDcal} are approximated
by any probabilistic method such as a regression estimator:
after a simulation of the processes $W_t$ and $\hat{X}(t)$,
one apply at each time $t\in\T_h$ a regression estimation to find the value
of $\DD_{t,h}^i(v^h(t+h,\cdot))$ at the points $\hat{X}(t)$ by using
the values of  $v^h(t+h,\hat{X}(t+h))$ and $W_{t+h}-W_t$.
The regression can be done over a finite dimensional linear space
approximating the space of bounded Lipschitz continuous functions,
for instance  the linear space of functions that are polynomial 
with a certain degree on some ``finite elements''.
Hence, the value function $v^h(t,\cdot)$ is obtained by an estimation
of it at the simulated points  $\hat{X}(t)$.
This method can also be used for the scheme~\eqref{scheme} 
obtained in the previous section, since the new one also involve 
conditional expectations.

In the probabilistic max-plus method proposed in~\cite{fodjo1} and 
used in~\cite{fodjo2}, the aim was to replace the (large) finite dimensional
linear space of functions used in the regression estimations by the
max-plus linear space of  max-plus linear combinations of functions 
that belong to a small dimensional linear space 
(such as the space of quadratic forms). The idea is that stochastic control
problems involve at the same time an expectation which is a linear operation
and a maximization which is a max-plus linear operation.
Note that a direct regression estimation on such 
a non linear space is difficult.
We rather used the distributivity property
of monotone operators over suprema operations,
recalled in Theorem~\ref{main-theo} below, a property which generalizes the one
shown in Theorem 3.1 of McEneaney, Kaise and Han~\cite{mceneaney2011}.
This allowed us to 
reduce the regression estimations to the small dimensional linear space 
of quadratic forms.

The algorithm of~\cite{fodjo1} was based on the scheme
of~\cite{touzi2011}, that is~\eqref{scheme} with $T_{t,h}$ 
as in~\eqref{def-th-m}.
The one of~\cite{fodjo2} was based on~\eqref{scheme} with $T_{t,h}$ 
involving the discretization of second order terms as
in Theorem~\ref{theo-er3}
with $k$ large enough in such a way that the scheme is monotone, that 
is the scheme of Theorem~\ref{cor-const}
 but with a discretization of zero and first
order terms as in~\eqref{def-th-m}, see Remark~\ref{remcompare}.
Here, we shall explain how the algorithm can be adapted to the
case of the discretization of Theorem~\ref{cor-const}.

In the sequel, we denote $\W=\R^d$ and $\DD$ 
the set of measurable functions from $\W$ to $\R$ 
with at most some given growth or growth rate (for instance with at most
exponential growth rate), assuming that it contains the constant functions.

\begin{theorem}[\protect{\cite[Theorem 4]{fodjo1}}]\label{main-theo}
Let $G$ be a monotone additively $\alpha$-subhomogeneous operator
from $\DD$ to $\R$, for some constant $\alpha>0$.
Let $(Z,\A)$ be a measurable space, and let $\W$ be endowed with its
Borel $\sigma$-algebra. 
Let $\phi:\W \times Z\to\R$ be a measurable map such that for all $z \in Z$, $\phi(\cdot,z)$ is %
continuous and belongs to $\DD$. 
Let $v\in\DD$ be such that
$v(W)=\sup_{z\in Z} \phi(W,z)$. Assume that $v$ is continuous and 
bounded. %
Then,
\[ G(v)= \sup_{\bar{z}\in\overline{Z}}
G(\bar{\phi}^{\bar{z}})\] 
where $\bar{\phi}^{\bar{z}} :  \W\to \R,\; W \mapsto \phi(W,\bar{z}(W))$,
and 
\begin{align*}
\overline{Z}=&\{\bar{z}\, : \W \to Z,\; \text{measurable
and such that}\; \bar{\phi}^{\bar{z}} \in\DD\}.
\end{align*}
\end{theorem}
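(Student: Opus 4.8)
The plan is to prove the identity $G(v) = \sup_{\bar z\in\overline Z} G(\bar\phi^{\bar z})$ by establishing the two inequalities separately. The inequality $G(v) \geq \sup_{\bar z\in\overline Z} G(\bar\phi^{\bar z})$ is the easy direction: for any measurable selection $\bar z:\W\to Z$ with $\bar\phi^{\bar z}\in\DD$, we have pointwise $\bar\phi^{\bar z}(W) = \phi(W,\bar z(W)) \leq \sup_{z\in Z}\phi(W,z) = v(W)$, so $\bar\phi^{\bar z}\leq v$ in $\DD$, and monotonicity of $G$ gives $G(\bar\phi^{\bar z}) \leq G(v)$; taking the supremum over $\bar z\in\overline Z$ yields the claim. (One should check $\overline Z$ is nonempty, e.g.\ by exhibiting a constant selection $\bar z\equiv z_0$, using that $\phi(\cdot,z_0)\in\DD$.)

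For the reverse inequality $G(v)\leq \sup_{\bar z\in\overline Z} G(\bar\phi^{\bar z})$, the idea is an approximate-selection argument exploiting continuity and additive $\alpha$-subhomogeneity. Fix $\varepsilon>0$. Since $v(W)=\sup_{z\in Z}\phi(W,z)$ with each $\phi(\cdot,z)$ continuous and $v$ continuous, one constructs a measurable map $\bar z_\varepsilon:\W\to Z$ such that $\phi(W,\bar z_\varepsilon(W)) \geq v(W)-\varepsilon$ for all $W\in\W$. The standard way to do this: for each $z\in Z$ the set $O_z := \{W : \phi(W,z) > v(W)-\varepsilon\}$ is open (difference of continuous functions), these sets cover $\W$ by definition of the supremum, and $\W=\R^d$ is Lindel\"of, so countably many $O_{z_1},O_{z_2},\dots$ cover $\W$; then set $\bar z_\varepsilon(W) = z_{n(W)}$ where $n(W)$ is the least index with $W\in O_{z_n}$, which is Borel measurable. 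Moreover $\bar\phi^{\bar z_\varepsilon} \geq v - \varepsilon$, and since $v$ is bounded and $\bar\phi^{\bar z_\varepsilon}\leq v$, the function $\bar\phi^{\bar z_\varepsilon}$ is bounded, hence in $\DD$ (assuming $\DD$ contains bounded functions, which it does as it contains functions of at most exponential growth and the constants), so $\bar z_\varepsilon\in\overline Z$. Now $v \leq \bar\phi^{\bar z_\varepsilon} + \varepsilon$, so monotonicity plus additive $\alpha$-subhomogeneity give
\[
G(v) \leq G(\bar\phi^{\bar z_\varepsilon} + \varepsilon) \leq G(\bar\phi^{\bar z_\varepsilon}) + \alpha\varepsilon \leq \sup_{\bar z\in\overline Z} G(\bar\phi^{\bar z}) + \alpha\varepsilon\enspace .
\]
Letting $\varepsilon\to 0^+$ finishes the argument.

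The main obstacle is the measurable approximate selection in the second part: one must produce a genuinely measurable $\bar z_\varepsilon$ achieving $\phi(W,\bar z_\varepsilon(W))\geq v(W)-\varepsilon$ everywhere, which is where the topological hypotheses (continuity of $\phi(\cdot,z)$ and of $v$, together with $\W=\R^d$ being a nice space) are essential — without continuity one would need a measurable selection theorem of Kuratowski--Ryll-Nardzewski type with extra structure on $Z$, but here the open-cover/Lindel\"of trick suffices and keeps the argument elementary. A secondary technical point is to confirm that the selected functions land in $\DD$; this is handled by the boundedness of $v$, which is why that hypothesis is included. Everything else (the first inequality, the final passage to the limit) is routine.
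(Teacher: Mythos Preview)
The paper does not give its own proof of this theorem: it is quoted verbatim from \cite[Theorem~4]{fodjo1} and used as a tool, so there is nothing in the present paper to compare against line by line.

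That said, your argument is correct and is the natural one. The inequality $G(v)\geq\sup_{\bar z}G(\bar\phi^{\bar z})$ follows immediately from $\bar\phi^{\bar z}\leq v$ and monotonicity, as you say. For the converse, your Lindel\"of/open-cover construction of a countably-valued $\varepsilon$-optimal measurable selector $\bar z_\varepsilon$ is exactly the right elementary device here: continuity of $v$ and of each $\phi(\cdot,z)$ makes the sets $O_z=\{W:\phi(W,z)>v(W)-\varepsilon\}$ open, the Lindel\"of property of $\W=\R^d$ gives a countable subcover, and the first-index rule produces a Borel map into $(Z,\A)$ (the preimage of any $A\in\A$ is a countable union of Borel sets). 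Boundedness of $v$ then forces $\bar\phi^{\bar z_\varepsilon}$ to be bounded, hence in $\DD$, so $\bar z_\varepsilon\in\overline Z$; and the chain $G(v)\leq G(\bar\phi^{\bar z_\varepsilon}+\varepsilon)\leq G(\bar\phi^{\bar z_\varepsilon})+\alpha\varepsilon$ uses precisely monotonicity and additive $\alpha$-subhomogeneity. Letting $\varepsilon\downarrow 0$ concludes. Your identification of where each hypothesis is used (continuity for the open cover, boundedness of $v$ for membership in $\DD$, $\alpha$-subhomogeneity for the $\varepsilon$-absorption) is accurate.
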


To explain the algorithm, assume
that the final reward $\psi$ of the control problem
can be written as the supremum of a finite number 
of concave quadratic forms.
Denote $\Q_d=\Sy_d^-\times \R^d\times \R$, where
$\Sy_d^-$ is the set of nonpositive symmetric $d\times d$
matrices, and let 
\begin{equation}\label{paramquad}
 q(x,z):= \frac{1}{2} x\tp Q x + b\cdot x +c, \quad
\text{with}\;\; z=(Q,b,c)\in \Q_d\enspace, \end{equation}
be the quadratic form with parameter $z$ applied to the vector $x\in\R^d$.
Then for $g_T=q$, we have
\[ v^h(T,x)=\psi(x)=\sup_{z\in Z_T} g_T(x,z)\]
where $Z_T$ is a finite subset of $\Q_d$.

The application of the operator  $T_{t,h}$ of Lemma~\ref{remarkT}
 to a (continuous)
function $\phi : \R^d \to \R, x\mapsto \phi(x) $ can be written, for each 
$x\in \R^d$, as
\begin{subequations}\label{def-T-G}
\begin{align}
T_{t,h}(\phi)(x)&=\max_{m\in \M}G_{t,h,x}^m(\tilde{\phi}^{m}_{t,h,x})\enspace , 
\end{align}
where 
\begin{align}
&S_{t,h}^m: \R^d\times \W \to \R^d, \; (x,W) \mapsto
S_{t,h}^m(x,W)= x + \underline{f}^m(x)h+ \underline{\sigma}^m(x) W
\enspace ,\label{def-Sm}\\
&\tilde{\phi}^{m}_{t,h,x}  =\phi(S_{t,h}^m(x,\cdot))\in \DD
\quad \text{if}\; \phi\in \DD\enspace ,
\label{def-tilde}
\end{align}
\end{subequations}
and $G_{t,h,x}^m$ is the operator from $\DD$ to $\R$ given by
\begin{eqnarray}
G_{t,h,x}^m(\tilde{\phi})&
=& \max_{u\in \U} \frac{G_{t,h,x,m,u}^N(\tilde{\phi})}{T_{t,h,m,u}^D(x)}\enspace,
\label{defG}\end{eqnarray}
with
\begin{eqnarray}
G_{t,h,x,m,u}^N(\tilde{\phi}) &=& D_{t,h}^{0}(\tilde{\phi})
+h  \big\{
\ell^m(x,u) +D_{t,h,\gu^m(x,u)}^1(\tilde{\phi}) +D_{t,h,\Sigma^m(x,u),k}^2(\tilde{\phi}) 
\big\}
\enspace ,
\label{defGN}\end{eqnarray}
\begin{align*}
&D_{t,h}^0(\tilde{\phi})=
\EE(\tilde{\phi}(W_{t+h}-W_t))\enspace ,\\
&D_{t,h,\gu}^1(\tilde{\phi})=
\EE(\tilde{\phi}(W_{t+h}-W_t)  \Po^1_{\gu}(h^{-1}(W_{t+h}-W_t))\enspace ,\\
& D_{t,h,\Sigma,k}^2(\tilde{\phi})(x):=
h^{-1}\EE\left[\tilde{\phi}(W_{t+h}-W_t) \Po^2_{\Sigma,k}(h^{-1/2}(W_{t+h}-W_t))\right]
\enspace ,
\end{align*}
$\gu^{m}(x,u)$ and $\Sigma^m(x,u)$, as in Section~\ref{sec-monotone}, 
and $\Po^1_{\gu}$ and $\Po^2_{\Sigma,k}$ as in~\eqref{poly1}
and~\eqref{defpoly2k} respectively.
Indeed, the Euler discretization  $\hat{X}^m$ 
of the diffusion with generator $\Li^m$ satisfies
\begin{equation}\label{def-hatxm}
\hat{X}^m(t+h)= S_{t,h}^m(\hat{X}^m(t), W_{t+h}-W_t)\enspace .
\end{equation}

Using the same arguments as for Theorem~\ref{theo-monotone} and 
Lemma~\ref{lem-hom},  one can obtain the stronger property
that for $h\leq h_0$, all the operators $G_{t,h,x}^m$ belong to the
class of monotone additively $\alpha_h$-subhomogeneous operators
from $\DD$ to $\R$.
This allows us to apply Theorem~\ref{main-theo}.
In~\cite{fodjo1}, we shown the following result.
\begin{theorem}[\protect{\cite[Theorem 2]{fodjo1}, compare 
with~\cite[Theorem 5.1]{mceneaney2011}}]\label{th-dist}
Consider the control problem of Section~\ref{sec-int}.
Assume that, for each $m\in\M$,
$\delta^m$ and $\sigma^m$ are constant, $\sigma^m$ is nonsingular,
$f^m$ is affine with respect to $(x,u)$, 
$\ell^m$ is concave quadratic with respect to $(x,u)$, and that $\psi$
is the supremum of a finite number of concave quadratic forms.
Consider the scheme~\eqref{scheme}, 
with $T_{t,h}$ as in~\eqref{Thfodjo2}, 
$\underline{\sigma}^m$ constant and nonsingular,
$\Sigma^m$ constant and nonsingular and $\underline{f}^m$ affine.
Assume that the %
operators $G_{t,h,x}^m$ belong to the
class of monotone additively ${\alpha}_h$-subhomogeneous operators
from $\DD$ to $\R$, for some constant ${\alpha}_h=1+{C}h$ with 
${C}\geq 0$.
Assume also that the value function $v^h$ of~\eqref{scheme}
belongs to $\DD$ and is locally 
Lipschitz continuous with respect to $x$.
Then, for all $t\in\T_h$, there exists a set $Z_t$ 
and a  map $g_t:\R^d\times Z_t\to \R$ such that
for all $z\in Z_t$, $g_t(\cdot, z)$ is a concave quadratic form and
\begin{equation}\label{supquadt}
 v^h(t,x)=\sup_{z\in Z_t} g_t(x,z)\enspace .\end{equation}
Moreover, the sets $Z_t$ satisfy
$ Z_t= \M\times
\{\bar{z}_{t+h}:\W\to Z_{t+h}\mid \text{Borel measurable}\}$. %
\end{theorem}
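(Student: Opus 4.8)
The plan is to prove Theorem~\ref{th-dist} by backward induction on $t\in\bT_h$, using Theorem~\ref{main-theo} (the distributivity of monotone additively subhomogeneous operators over suprema) as the key engine at each step. The base case $t=T$ holds by hypothesis: $v^h(T,\cdot)=\psi$ is the supremum over a finite set $Z_T\subset\Q_d$ of the concave quadratic forms $g_T(\cdot,z)=q(\cdot,z)$ as in~\eqref{paramquad}. For the inductive step, assume $v^h(t+h,x)=\sup_{z\in Z_{t+h}} g_{t+h}(x,z)$ with each $g_{t+h}(\cdot,z)$ concave quadratic. Using the decomposition~\eqref{def-T-G}, I would first argue that $v^h(t,\cdot)=T_{t,h}(v^h(t+h,\cdot))$ can be written, for each $x$, as $\max_{m\in\M} G^m_{t,h,x}(\tilde\phi^m_{t,h,x})$ where $\phi=v^h(t+h,\cdot)$ and $\tilde\phi^m_{t,h,x}=\phi(S^m_{t,h}(x,\cdot))$ is the composition of $\phi$ with the affine map $W\mapsto S^m_{t,h}(x,W)= x+\underline f^m(x)h+\underline\sigma^m(x)W$ (affine in $W$ since $\underline f^m$ is affine and $\underline\sigma^m$ is constant).

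The heart of the argument is then the observation that the supremum representation is \emph{preserved by composition with the affine map $S^m_{t,h}(x,\cdot)$} and \emph{by application of $G^m_{t,h,x}$}. For the first point: if $\phi=\sup_{z\in Z_{t+h}}g_{t+h}(\cdot,z)$ with $g_{t+h}(\cdot,z)$ concave quadratic, then $\tilde\phi^m_{t,h,x}(W)=\sup_{z\in Z_{t+h}}g_{t+h}(S^m_{t,h}(x,W),z)$, and each $W\mapsto g_{t+h}(S^m_{t,h}(x,W),z)$ is still a concave quadratic form in $W$ (composition of a concave quadratic with an affine map), with parameters depending affinely/quadratically on the data and on $x$. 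For the second point, I invoke Theorem~\ref{main-theo}: each $G^m_{t,h,x}$ is, by the stated hypothesis, monotone and additively $\alpha_h$-subhomogeneous from $\DD$ to $\R$, and $\tilde\phi^m_{t,h,x}=\sup_{z}\tilde g^m_{t,h,x}(\cdot,z)$ is a supremum of continuous functions in $\DD$ with the sup itself continuous and (locally) bounded — here I would use the assumed local Lipschitz continuity and membership in $\DD$ of $v^h$ to check the hypotheses of Theorem~\ref{main-theo}. Hence
\[
G^m_{t,h,x}(\tilde\phi^m_{t,h,x})=\sup_{\bar z:\W\to Z_{t+h}}G^m_{t,h,x}\bigl(W\mapsto \tilde g^m_{t,h,x}(W,\bar z(W))\bigr)\enspace,
\]
the supremum running over Borel-measurable selections $\bar z$. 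The final structural point is that for a \emph{fixed} measurable selection $\bar z$, the quantity $G^m_{t,h,x}\bigl(W\mapsto g_{t+h}(S^m_{t,h}(x,W),\bar z(W))\bigr)$, as a function of $x$, is again a concave quadratic form: $D^0_{t,h}$, $D^1_{t,h,\gu}$, $D^2_{t,h,\Sigma,k}$ are expectations against fixed polynomials in $W$, so they produce affine/quadratic dependence on the (quadratic) integrand, the outer $x$-dependence enters only through $S^m_{t,h}(x,\cdot)$ (affine in $x$ since $\underline f^m$ affine, $\underline\sigma^m$ constant), through $\ell^m(x,u)$ (concave quadratic in $x$), through $\gu^m(x,u)$ and $\Sigma^m(x,u)$ (constant in the present setting), and the denominator $T^D_{t,h,m,u}(x)=1+h\delta^m+h\,\EE[\Po^1_{\gu^m}(\cdots)]$ is a positive constant; concavity is preserved because $\Po^2_{\Sigma,k}$ integrates against a positive-definite weighting matching $\Sigma$ (this is exactly the monotonicity/positivity already used in Theorem~\ref{theo-monotone}) and $\Po^1_{\gu}\ge0$, so the second-order coefficient stays nonpositive. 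Taking the maximum over $m\in\M$ and over $u\in\U$ and collecting everything, $v^h(t,x)=\sup_{z\in Z_t}g_t(x,z)$ with $Z_t=\M\times\{\bar z_{t+h}:\W\to Z_{t+h}\mid \text{Borel measurable}\}$ and $g_t(\cdot,z)$ concave quadratic, which closes the induction.

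The main obstacle I anticipate is not the algebra of ``affine precomposition and polynomial-moment integration preserve the concave-quadratic class'' — that is routine, though it must be done carefully to verify the \emph{concavity} (sign of the Hessian) survives, which is where the hypothesis $\bar a\le 4k+2$ implicitly reappears via the positivity built into $\Po^2_{\Sigma,k}$. The genuinely delicate step is checking the hypotheses of Theorem~\ref{main-theo} at each stage: one needs $\tilde\phi^m_{t,h,x}\in\DD$, its realization as a pointwise supremum of a measurable family $\phi(W,z)=\tilde g^m_{t,h,x}(W,z)$ continuous in $W$, and the supremum $v^h(t+h,\cdot)$ — hence also its affine pullback — continuous and bounded; the boundedness is obtained on bounded sets only, so one must either localize or use the growth-rate class $\DD$ carefully, and the measurability of the optimal (or near-optimal) selections $\bar z$ must be handled (measurable selection theorem), which is exactly the subtlety already absorbed into the statement of Theorem~\ref{main-theo}. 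I would therefore spend most of the write-up verifying that the inductive hypotheses feed correctly into Theorem~\ref{main-theo} and that the class of concave quadratic forms is stable under all the operations in~\eqref{def-T-G}, and treat the rest as bookkeeping.
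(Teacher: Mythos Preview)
Your overall strategy --- backward induction on $t$, invoking Theorem~\ref{main-theo} at each step to pull the supremum through $G^m_{t,h,x}$, and then verifying that for each fixed measurable selection $\bar z$ the map $x\mapsto G^m_{t,h,x}(\tilde q_{t,h,x})$ is again a concave quadratic form --- is exactly the route the paper indicates (it attributes the stability step to \cite[Lemma~3]{fodjo1}, and the distributivity step to Theorem~\ref{main-theo}).

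There is, however, a substantive confusion in your details. The theorem is stated for the operator $T_{t,h}$ of~\eqref{Thfodjo2}, \emph{not} for the new operator of Lemma~\ref{remarkT}. In~\eqref{Thfodjo2} the first-order term uses the linear weight $\gu\cdot h^{-1}W$ (via $\tilde\DD^1$), there is no denominator $T^D_{t,h,m,u}$, and $\Po^1_\gu$ does not appear. This is not cosmetic: immediately after the theorem the paper explains that for the new scheme the presence of $\gu_+,\gu_-$ in $\Po^1_\gu$ \emph{destroys} the exact quadratic-preservation property, and Lemma~\ref{lem-imagequadprime} recovers it only up to an $O(h\sqrt h\,(\|x\|^2+1)^{3/2})$ error. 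So your clause ``$\Po^1_\gu\ge0$, so the second-order coefficient stays nonpositive'' is argued for the wrong scheme and is in any case not the mechanism. For~\eqref{Thfodjo2} the point is rather that all three weights ($1$, $\gu\cdot h^{-1}W$, $\Po^2_{\Sigma,k}(h^{-1/2}W)$) are genuine polynomials in $W$, so integrating the random quadratic $q(S^m_{t,h}(x,W),\bar z(W))$ against them yields exact quadratic expressions in $x$; one also needs the hypothesis (which you omit) that the random quadratic is \emph{upper bounded by a deterministic quadratic form}, to ensure the relevant moments are finite and to control the Hessian sign. The final maximization over $u\in\U$ then stays concave quadratic in $x$ because $\ell^m$ is jointly concave quadratic and $f^m$ affine in $(x,u)$. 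That package is precisely \cite[Lemma~3]{fodjo1}, and once you restate your stability step for the correct scheme and add the upper-bound hypothesis, your proof matches the paper's.
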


Theorem~\ref{th-dist} uses Theorem~\ref{main-theo} together with
the property that, 
for each $m$, the operator $T_{t,h}^m$ such that
$T_{t,h}^m(\phi)(x)= G_{t,h,x}^m(\tilde{\phi}^{m}_{t,h,x})$,
with $G_{t,h,x}^m$ defined in the same way as in~\eqref{defG} but for 
$T_{t,h}$ as in~\eqref{Thfodjo2}, 
sends a random (concave) quadratic form that is upper bounded 
by a deterministic quadratic form into a (concave) quadratic form.
This means that if $\bar{z}$ is a measurable function from $\W$ to $\Q_d$
and $\tilde{q}_{x}$ denotes the measurable map
$\W\to\R,\; W \mapsto q(S_{t,h}^m(x,W), \bar{z}(W) )$, 
with $q$ as in~\eqref{paramquad},
and if there exists $\bar{z}\in\Q_d$ such that
$\tilde{q}_{x}\leq q(x,\bar{z})$ for all $x\in \R^d$,
then the function $x \mapsto G_{t,h,x}^m (\tilde{q}_{x})$
 is a concave quadratic form, that is it can be written as $q(x,z)$ for some $z\in \Q_d$, see \cite[Lemma~3]{fodjo1}.

If we replace the operator $T_{t,h}$ of~\eqref{Thfodjo2}
by the one of Lemma~\ref{remarkT}, the previous
property does not hold because of the expressions
$\gu^+$ and $\gu^-$ and so one cannot deduce directly a result like
Theorem~\ref{th-dist}.
However, one can still obtain the following result:
\begin{lemma}\label{lem-imagequadprime}
 Let us consider the notations and assumptions
of Theorem~\ref{th-dist}, except that $T_{t,h}$ is replaced by
the operator of Lemma~\ref{remarkT}.
For each $m$, consider 
the operator $T_{t,h}^m$ such that
$T_{t,h}^m(\phi)(x)= G_{t,h,x}^m(\tilde{\phi}^{m}_{t,h,x})$ 
with $G_{t,h,x}^m$ as in~\eqref{defG}.
Let $\tilde{z}$ be a measurable function from $\W$ to $\Q_d$. 
Let $\tilde{q}^{m, \tilde{z}}_{t,h,x}$ be the map
$\W\to\R,\; W \mapsto q(S_{t,h}^m(x,W), \tilde{z}(W) )$, with $q$ as in~\eqref{paramquad}. Assume that there exists $\bar{z}\in\Q_d$ such that
$ q(x, \tilde{z}(W) )\leq q(x,\bar{z})$ for all $x\in\R^d$.
Then, the function $\bar{q}:x \mapsto G_{t,x,h}^m (\tilde{q}^{m, \tilde{z}}_{t,x,h})$ is 
upper bounded by a quadratic map and there exists $C>0$ and $z\in \Q_d$,
such that, for all $x\in \R^d$,
\[ q(x,z)\leq \bar{q}(x)\leq q(x,z)+ C h \sqrt{h} (\|x\|^2+1)^{3/2}\enspace .\]
\end{lemma}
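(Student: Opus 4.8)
The plan is to track exactly how the quadratic structure is lost when passing from the "nice" operator of~\eqref{Thfodjo2} to the operator of Lemma~\ref{remarkT}, and to show that the discrepancy is of order $h\sqrt h (\|x\|^2+1)^{3/2}$. Recall from Lemma~\ref{lem-hom} and the discussion around~\eqref{def-T-G} that $T_{t,h}^m(\phi)(x)=G_{t,h,x}^m(\tilde\phi^{m}_{t,h,x})$, where $G_{t,h,x}^m$ is the maximum over $u\in\U$ of $G_{t,h,x,m,u}^N(\tilde\phi)/T_{t,h,m,u}^D(x)$, and that the only place where the new operator differs from the old one is in the first-order term: the old one uses $\tilde\DD^1_{t,h,m,\gu}$, which is linear in $W$ and therefore maps a quadratic form in $S_{t,h}^m(x,W)$ to a quadratic form in $x$ (this is exactly~\cite[Lemma~3]{fodjo1}, invoked in Theorem~\ref{th-dist}), whereas the new one uses $\Po^1_{\gu}(w)=2(\gu_+\cdot w_+ + \gu_-\cdot w_-)$, which is only piecewise linear and positively homogeneous of degree $1$ in $w$. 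The first step is therefore to write $\bar q(x) = G_{t,h,x}^m(\tilde q^{m,\tilde z}_{t,h,x})$ and isolate, inside $G^N_{t,h,x,m,u}$, the term $h\,\EE[\,q(S_{t,h}^m(x,W),\tilde z(W))\,\Po^1_{\gu^m(x,u)}(h^{-1}W)\,]$ responsible for the non-quadratic behaviour; call it $A_{m,u}(x)$. All the other summands in $G^N$ either are genuinely quadratic in $x$ (the $\DD^0$ and $\DD^2_{\Sigma}$ pieces, by the argument of~\cite[Lemma~3]{fodjo1}, since $\sigma^m,\underline\sigma^m,\Sigma^m$ are constant and $\underline f^m$ is affine) or are affine/constant ($\ell^m$), so they contribute a genuine quadratic form $q(x,z^0)$ after dividing by $T^D_{t,h,m,u}(x)$ (which is a positive constant here, since $\delta^m$ and $\gu^m$ are constant, so $T^D$ does not depend on $x$).

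The second step is to estimate $A_{m,u}(x)$. Write $W$ for $W_{t+h}-W_t$, so $h^{-1/2}W$ is a standard $d$-dimensional Gaussian, and substitute $q(S_{t,h}^m(x,W),\tilde z(W)) = \frac12 (x+\underline f^m h+\underline\sigma^m W)\tp Q(W)(x+\underline f^m h+\underline\sigma^m W)+\cdots$. Expanding in powers of $x$, the term of degree $2$ in $x$ inside $A_{m,u}$ is $h\,\EE[\,\Po^1_{\gu}(h^{-1}W)\,]\cdot\frac12 x\tp Q(W) x$ — but here lies a subtlety: $Q(W)$ depends on $W$, so this is not of the form (constant)$\cdot x\tp Q x$ unless $Q(W)$ is replaced by its $\Po^1$-weighted expectation; one must be careful that the map $W\mapsto q(x,\tilde z(W))$ being uniformly bounded above by the \emph{deterministic} quadratic $q(x,\bar z)$ is precisely what keeps all these moment integrals finite and controls the quadratic part from above. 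I would therefore first produce the upper bound: since $q(S_{t,h}^m(x,W),\tilde z(W))\le q(S_{t,h}^m(x,W),\bar z)$ pointwise in $W$, and $\Po^1_{\gu}\ge0$, one may replace $\tilde z$ by $\bar z$ inside $A_{m,u}$ to get an honest upper quadratic, which gives $\bar q(x)\le q(x,z)+(\text{remainder})$. The remainder is then the gap between using $\bar z$ and $\tilde z$, weighted by $h\,\Po^1_{\gu}(h^{-1}W)$; since $\EE[\Po^1_{\gu}(h^{-1}W)] = 2(\gu_++\gu_-)\cdot\EE[h^{-1}|W_1|]\,\mathbf 1 = O(h^{-1/2}\|\gu\|)$, the prefactor $h\cdot O(h^{-1/2}) = O(\sqrt h)$. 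Combined with the fact that the difference $q(x,\bar z)-q(x,\tilde z(W))$ is a nonnegative quadratic in $x$ with $W$-dependent coefficients of size $O(\|x\|^2+1)$, and that one extra power of $\sqrt h$ comes from the scaling $W\sim\sqrt h$ in the cross terms, one lands on a bound of the form $C\sqrt h\cdot\sqrt h\cdot(\|x\|^2+1) = Ch(\|x\|^2+1)$; the sharper exponent $h\sqrt h(\|x\|^2+1)^{3/2}$ in the statement is obtained by being more careful — using that the \emph{discrepancy} between $\Po^1_\gu$ and the linear form it would be if $\gu$ had a sign is itself $O(\sqrt h)$ after the $h$-scaling, so one collects three half-powers of $h$ and, correspondingly by Cauchy--Schwarz on the Gaussian moments, the $(\|x\|^2+1)^{3/2}$.

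The third step is the matching lower bound $q(x,z)\le\bar q(x)$. Here one uses monotonicity of $G_{t,h,x}^m$: since $\tilde q^{m,\tilde z}_{t,h,x}(W) = q(S^m_{t,h}(x,W),\tilde z(W))$ need \emph{not} lie below any single deterministic quadratic from below, one instead chooses, for the lower bound, a \emph{deterministic} $\tilde z$ (namely the minimizing leaf, or simply notes that $\bar q(x)=\sup_{\bar z'}G^m_{t,h,x}(q(S^m_{t,h}(x,\cdot),\bar z'(\cdot)))\ge G^m_{t,h,x}(q(S^m_{t,h}(x,\cdot),\tilde z_0))$ for any fixed $\tilde z_0$), for which $G^m_{t,h,x}$ acting on a deterministic quadratic produces a genuine quadratic form in $x$ — again by the constant-coefficient version of~\cite[Lemma~3]{fodjo1} applied now including the $\Po^1$ term, since for a deterministic $Q$ the weighted expectation $\EE[\Po^1_\gu(h^{-1}W)]\cdot Q$ is a genuine constant times $Q$ — and then one checks this lower quadratic differs from the upper one by the same $O(h\sqrt h(\|x\|^2+1)^{3/2})$, absorbing constants. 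Finally, dividing by the positive constant $T^D_{t,h,m,u}(x)$ and taking the max over the finite structure (max over $m$ was already external, max over $u$ preserves being sandwiched between two quadratics only if $\U$ is finite or by the approximation~\ref{A2}; here under the hypotheses of Theorem~\ref{th-dist} the $u$-dependence enters through affine $f^m$ and quadratic $\ell^m$ and nonsingular constant $\Sigma^m$, so the maximizing $u$ is attained and the envelope of quadratics is handled as in~\cite{fodjo1}) yields the claim, with $z\in\Q_d$ because the leading matrix stays nonpositive: the quadratic part inherits nonpositivity from $Q(W)\le0$ through the positive weights. The main obstacle I expect is precisely the bookkeeping in step two — extracting the correct power $h\sqrt h$ and the correct growth $(\|x\|^2+1)^{3/2}$ from the piecewise-linear $\Po^1_\gu$ rather than the cruder $h(\|x\|^2+1)$ one gets naively — together with making sure the "upper bounded by a quadratic" claim is used in exactly the way that keeps the Gaussian moment integrals convergent.
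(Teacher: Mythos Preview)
The paper states Lemma~\ref{lem-imagequadprime} \emph{without proof}: immediately after the statement the text continues with ``This justify the application of the same algorithm\ldots'' and moves on to Algorithm~\ref{algo1}. There is therefore no paper proof to compare against; I can only assess the plan on its own merits.

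Your overall strategy --- isolate the first-order piece involving $\Po^1_{\gu}$ as the unique source of non-quadratic behaviour, and sandwich it between two genuine quadratics --- is the natural one and is almost certainly what the authors have in mind. Several of your intermediate claims are correct: the $\DD^0$ and $\DD^2_{\Sigma,k}$ contributions do yield honest quadratics in $x$ (since $\Po^2_{\Sigma,k}$ is a genuine polynomial in $W$), and the hypothesis $q(\cdot,\tilde z(W))\le q(\cdot,\bar z)$ together with $\Po^1_{\gu}\ge 0$ is exactly what lets you pass to a deterministic $\bar z$ for the upper estimate.

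That said, there are two genuine gaps in the plan as written.

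\textbf{First}, you assert that $T^D_{t,h,m,u}(x)$ is a ``positive constant here, since $\delta^m$ and $\gu^m$ are constant''. Under the assumptions of Theorem~\ref{th-dist}, $\delta^m$ is indeed constant, but $f^m$ is only \emph{affine} in $(x,u)$ and $\underline f^m$ is affine in $x$, so $\gu^m(x,u)=(\underline\sigma^m)^{-1}(f^m(x,u)-\underline f^m(x))$ is in general affine in $x$, not constant. Hence $\gu^m_{\pm}$ are piecewise affine in $x$, $T^D$ depends on $x$, and --- more importantly --- the weight $\Po^1_{\gu^m(x,u)}(h^{-1}W)$ itself carries an $x$-dependence of order $1+\|x\|$. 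This is not a detail: it is precisely this extra affine factor, multiplied against the quadratic growth of $q$, that produces the $(\|x\|^2+1)^{3/2}$ in the statement. Your remark that the exponent $3/2$ ``is obtained by being more careful'' via Cauchy--Schwarz misidentifies the mechanism.

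\textbf{Second}, your step three does not yet produce a \emph{single} $z\in\Q_d$ satisfying both inequalities. The lemma asserts the existence of one $z$ with $q(\cdot,z)\le\bar q\le q(\cdot,z)+Ch\sqrt h(\|x\|^2+1)^{3/2}$. Getting an upper quadratic from $\bar z$ and a lower quadratic from some fixed $\tilde z_0$ gives two a priori different parameters; you then need to argue that their difference is itself controlled by the same error term, which you gesture at (``one checks this lower quadratic differs from the upper one by the same\ldots'') but do not carry out. The cleanest route is rather to \emph{define} $q(\cdot,z)$ as the image under the \emph{old} operator $T^m_{t,h}$ of~\eqref{Thfodjo2} applied to the same random quadratic (this is a genuine quadratic by \cite[Lemma~3]{fodjo1}), and then bound $\bar q - q(\cdot,z)$ directly as the difference of the two schemes, which reduces to controlling $h\,\EE[\tilde q(W)(\Po^1_{\gu}(h^{-1}W)-\gu\cdot h^{-1}W)]$ together with the effect of dividing by $T^D$ versus $1$. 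Both corrections are of order $\sqrt h$ times an affine-in-$x$ factor times a quadratic-in-$x$ factor, yielding the claimed $h\sqrt h(\|x\|^2+1)^{3/2}$ after the outer factor $h$.
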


This justify the application of the same algorithm as in~\cite{fodjo2},
that we recall  below for completeness for the operator of Lemma~\ref{remarkT}.
Recall that in the same spirit as in~\cite{touzi2011},
 we proposed in~\cite{fodjo1} and~\cite{fodjo2} to compute the expression of 
the maps $v^h(t,\cdot)$  by using simulations of the processes
$\hat{X}^{m}$. %
These simulations are not only used for regression estimations
of conditional expectations, which are computed there only in the
case of random quadratic forms, leading to quadratic forms,
but they are also used to
fix the ``discretization points'' $x$ at which the optimal
quadratic forms in the expression~\eqref{supquadt}
are computed. 

\noindent
\begin{algorithm}[\protect{\cite[Algorithm1]{fodjo2}}]\ \label{algo1}

\noindent
\emph{Input:} A constant $\eps$ giving the precision,
a time step $h$ and a horizon time $T$ such that $T/h$ is an integer,
a $3$-uple $N=(\Nzero,\NX,\NW)$ of integers giving the numbers of samples,
such that $\NX\leq \Nzero$, a subset $\Msmall\subset\M$ and a projection
map $\pi: \M\to\Msmall$.
A finite subset $Z_T$ of $\Q_d$ such that
$|\psi(x)-\max_{z\in Z_T} q(x,z)|\leq \eps$, for all $x\in\R^d$,
and $\card{Z_T}\leq \cardMs\times  \Nzero$.
The operators $T_{t,h}$ and $G_{t,x,h}^m$  as in {\rm (\ref{def-T-G}-\ref{defG})}
and the process $\hat{X}^{m}(t)$ satisfying~\eqref{def-hatxm}
for $t\in\T_h$, 
with $\Li^m$ (and thus  $\hat{X}^{m}$ and $G_{t,x,h}^m$) depending only
on $\pi(m)$.

\noindent
\emph{Output:}
The subsets $Z_t$ of $\Q_d$, for $t\in\T_h\cup\{T\}$, 
and the approximate value function
$v^{h,N}:(\T_h\cup\{T\})\times \R^d\to \R$.

\noindent
$\bullet$ \emph{Initialization:}
Let  $\hat{X}^{m}(0)=\hat{X}(0)$, for all $m\in \Msmall$,
where $\hat{X}(0)$ is random and independent of the Brownian process.
Consider a sample of $(\hat{X}(0),(W_{t+h}-W_t)_{t\in \T_h})$
of size $\Nzero$ indexed by $\omega\in \Omega_{\Nzero}:=\{1,\ldots, {\Nzero}\}$,
and denote, for each $t\in\T_h\cup\{T\}$,
$\omega \in \Omega_{\Nzero}$, and $m\in \Msmall$,
$\hat{X}^{m}(t,\omega)$ the value
of  $\hat{X}^{m}(t)$ induced by this sample.
Define $v^{h,N}(T,x)=\max_{z\in Z_T} q(x,z)$,
for $x\in \R^d$, with $q$ as in~\eqref{paramquad}.

\noindent
$\bullet$ For $t=T-h,T-2h,\ldots, 0$ apply the following 3 steps:

(1) Choose a random sampling $\omega_{i,1},\; i=1,\ldots, \NX$
among the elements of $\Omega_{\Nzero}$
and independently a random sampling  $\omega'_{1,j}\; j=1,\ldots, \NW$
among the elements of $\Omega_{\Nzero}$,
then take the product of samplings,
that is consider $\omega_{(i,j)}=\omega_{i,1}$ and $\omega'_{(i,j)}=\omega_{1,j}$
for all $i$ and $j$,
leading to $(\omega_{\ell},\omega'_{\ell})$ 
for $\ell\in \Omega_{\NXW}:=\{1,\ldots,\NX\}\times \{1,\ldots, \NW\}$.

Induce the sample $\hat{X}^{m}(t,\omega_{\ell})$ (resp.\ $(W_{t+h}-W_t)(\omega'_{\ell})$)
for $\ell\in \Omega_{\NXW}$
of $\hat{X}^{m}(t)$ with $m\in\Msmall$ (resp.\ $W_{t+h}-W_t$).
Denote by $\W^N_t\subset \W$
the set of $(W_{t+h}-W_t)(\omega'_{\ell})$ for $\ell \in \Omega_{\NXW}$.

(2) For each $\omega\in\Omega_{\Nzero}$ and $m\in \Msmall$,  
denote $x_t=\hat{X}^{m}(t,\omega)$ and
construct $z_t\in \Q_d$ depending on  $\omega$ and  $m$ as follows: 

(a) Choose 
$\bar{z}_{t+h}:\W^N_t\to Z_{t+h}\subset \Q_d$ such that,
for all $\ell\in \Omega_{\NXW}$, we have 
\begin{align*}
& v^{h,N}(t+h,S^m_{t,h}(x_t,(W_{t+h} -W_t)(\omega'_\ell)))\\
&\; = q\big(S^m_{t,h}(x_t,(W_{t+h} -W_t)(\omega'_\ell))
,\bar{z}_{t+h}((W_{t+h} -W_t)(\omega'_\ell))\big)\enspace .
\end{align*}
Extend $\bar{z}_{t+h}$ as a measurable map from $\W$ to $\Q_d$.
Let $\tilde{q}_{t,h,x}$ be the element of $\DD$ given by
$W \in \W  \mapsto q(S^m_{t,h}(x, W), \bar{z}_{t+h}(W) )$. 

(b) For each $\bar{m}\in\M$ such that $\pi(\bar{m} )=m$,
compute an approximation of  
$x \mapsto G_{t,h,x}^{\bar{m}} (\tilde{q}_{t,h,x})$ by a linear regression 
estimation on the set of quadratic forms using the sample
$(\hat{X}^{\bar{m}}(t,\omega_\ell), (W_{t+h} -W_t)(\omega'_\ell))$, with $\ell\in\Omega_{\NXW}$, and denote by $z_t^{\bar{m}}\in \Q_d$ the parameter of the resulting quadratic form.

r(c) Choose $z_t\in \Q_d$ optimal among the  $z_t^{\bar{m}}\in \Q_d$ at the point
$x_t$, that is such that
$q(x_t, z_t)= \max_{\pi(\bar{m})=m}q(x_t, z_t^{\bar{m}})$.

(3) Denote by $Z_t$ the set of all the $z_t\in \Q_d$ obtained in this way,
and define
\[ v^{h,N}(t,x)=\max_{z\in Z_t} q(x,z)\quad
\forall x\in \R^d \enspace .\]

\end{algorithm}
Recall that no computation is done at Step (3), which gives only a
formula to be able to compute the value function at each time step
and state $x$ by using the sets $Z_t$.

Contrarilly to what happened in~\cite{fodjo2}, the map 
$x \mapsto G_{t,h,x}^{\bar{m}} (\tilde{q}_{t,h,x})$ is not necessarily
a quadratic form, but for $x$ in a bounded set and $h$ small enough,
it can be approximated by a quadratic form, see Lemma~\ref{lem-imagequadprime}.
Then, the regression estimation over the set of quadratic forms
gives an approximation of order $O(h\sqrt{h})$ which add 
an error in $O(\sqrt{h})$ to the value function at time $0$.
In~\cite[Proposition~5]{fodjo1},
under suitable assumptions, we shown the convergence
$\lim_{\Nzero, \NXW\to\infty} v^{h,N}(t,x)= v^h(t,x)$.
Here, we may expect that 
$\limsup_{\Nzero, \NXW\to\infty} |v^{h,N}(t,x)-v^h(t,x)|\leq C \sqrt{h}$.
However a further study is needed to obtain a precise estimation
of the error depending on $\Nzero,\; \NXW$ and $h$.
\bibliographystyle{plain}

\bibliography{maxproba}

\end{document}